\newcommand{\as}{\\[.6em]}
\newcommand{\AS}{\\[1.2em]}
\newcommand{\dis}{\displaystyle}
\newcommand{\bela}[1]{\begin{equation}\label{#1}}
\newcommand{\ela}{\end{equation}}
\newcommand{\bear}[1]{\begin{array}{#1}}
\newcommand{\ear}{\end{array}}
\newcommand{\br}{\mbox{\boldmath $r$}}
\newcommand{\ba}{\mbox{\boldmath $a$}}
\newcommand{\bb}{\mbox{\boldmath $b$}}
\newcommand{\bM}{\mbox{\boldmath $M$}}
\newcommand{\diag}{\operatorname{diag}}
\newcommand{\Z}{\mathbbm{Z}}
\renewcommand{\P}{\mathbbm{P}}
\newcommand{\R}{\mathbbm{R}}
\newcommand{\C}{\mathbbm{C}}
\newcommand{\n}{\mbox{\boldmath $n$}}
\newcommand{\Msf}{\mathsf{M}}
\newcommand{\Vsf}{\mathsf{V}}
\newcommand{\Wsf}{\mathsf{W}}
\newcommand{\lsf}{\mathsf{l}}
\newcommand{\psf}{\mathsf{p}}
\newcommand{\asf}{\mathsf{a}}
\newcommand{\bsf}{\mathsf{b}}
\newcommand{\xsf}{\mathsf{x}}
\newcommand{\ysf}{\mathsf{y}}
\newcommand{\Bsf}{\mathsf{B}}
\newcommand{\calM}{\mathcal{M}}
\theoremstyle{plain}
\newtheorem{theorem}{Theorem}[section]
\newtheorem{lemma}[theorem]{Lemma}
\theoremstyle{definition}
\newtheorem{definition}[theorem]{Definition}
\newtheorem{remark}[theorem]{Remark}
\numberwithin{equation}{section}
\begin{document}

\begin{center} 
 {\Large\bf Discrete line complexes and integrable\vspace{2mm} evolution of minors}\,\footnote{This research was supported by the DFG Collaborative Research Centre SFB/TRR 109 {\em Discretization in Geometry and Dynamics} and the Australian Research Council (ARC).}
\end{center}

\smallskip

\begin{center}
 \sc
 A.I.\ Bobenko$\,^{2}$ and W.K.\ Schief$\,^{3,4}$
\end{center}

\smallskip

\begin{center}
 \small\sl
$^2$ Institut f\"ur Mathematik,Technische Universit\"at Berlin, Stra\ss e des 17.\ Juni 136, 10623 Berlin, Germany\\[2mm]
$^3$ School of Mathematics and Statistics, The University of New South Wales, Sydney, NSW 2052, Australia\\[2mm]
$^4$ Australian Research Council Centre of Excellence for Mathematics and Statistics of Complex Systems, School of Mathematics and Statistics, The University of New South Wales, Sydney, NSW 2052, Australia
\end{center}

\begin{abstract}
Based on the classical Pl\"ucker correspondence, we present algebraic and geometric properties of discrete integrable line complexes in $\C\P^3$. Algebraically, these are encoded in a discrete integrable system which appears in various guises in the theory of continuous and discrete integrable systems. Geometrically, the existence of these integrable line complexes is shown to be guaranteed by Desargues' classical theorem of projective geometry. A remarkable characterisation in terms of correlations of $\C\P^3$ is also recorded.
\end{abstract}

\section{Introduction}
Line congruences, that is, two-parameter families of lines, constitute fundamental objects in classical differential geometry \cite{Finikov59}. In particular, normal and Weingarten congruences have been studied in great detail \cite{Eisenhart60}. Their importance in connection with the geometric theory of integrable systems has been well documented (see \cite{Doliwa01} and references therein). Recently, in the context of integrable discrete differential geometry \cite{BobenkoSuris09}, attention has been drawn to {\em discrete line congruences}~\cite{DoliwaSantiniManas00}, that is, two-parameter families of lines which are (combinatorially) attached to the vertices of a $\Z^2$ lattice. Discrete Weingarten congruences have been shown to lie at the heart of the B\"acklund transformation for discrete pseudospherical surfaces \cite{Sauer50,Wunderlich51,BobenkoPinkall96}.  Discrete normal congruences have been used to define Gaussian and mean curvatures and the associated Steiner formula for discrete analogues of surfaces parametrised in terms of curvature coordinates \cite{Schief03,Schief06,BobenkoPottmannWallner2010}. Discrete line congruences have also found important applications in architectural geometry \cite{WangJiangBompasPottmann13}.

Here, we are concerned with the extension of discrete line congruences in a three-dimensional complex projective space $\C\P^3$ to three-dimensional ``lattices of lines'', that is, maps of the form
\bela{I1}
  \lsf : \Z^3\rightarrow{\{\mbox{lines in $\C\P^3$}}\}.
\ela
The latter define three-parameter families of lines which may be termed {\em discrete line complexes}. In the following, attention could be restricted to discrete line complexes in a real projective space $\R\P^3$. However, in order to analyse integrable line complexes in classical (Lie and Pl\"ucker) sub-geometries of projective geometry, it is required to consider a complex ambient space. The corresponding rich geometric structure is the subject of a forthcoming publication. Discrete line complexes in a four-dimensional (projective) space have been shown to be integrable if any two neighbouring lines intersect \cite{DoliwaSantiniManas00}. Here, integrability is understood in the sense of multi-dimensional consistency, that is, in a well-defined sense, these discrete line complexes may be extended to $\Z^N$ lattices of lines such that the restriction to any $\Z^3$ sub-lattice constitutes a discrete line complex.

This paper is based on the observation that a system of algebraic equations which arises in various avatars in the theory of both continuous and discrete integrable systems may be interpreted without reference to its origins as a system of integrable discrete equations. Thus, in Sections 2 and 3, we set down this privileged discrete integrable system ({\em $M$-system}) and indicate how it is related to, for instance, the important Fundamental and Darboux transformations of classical differential geometry \cite{Eisenhart62, RogersSchief02}, the theory of conjugate lattices \cite{BobenkoSuris09,DoliwaSantiniManas00} and the hexahedron recurrence which has been proposed in the context of cluster algebras and dimer configurations \cite{KenyonPemantle13}. The latter connection is made by interpreting the $M$-system as discrete evolution equations for the minors of a matrix, which, in turn, provides a link with the `principal minor assignment problem'~\cite{HoltzSturmfels07}. 

In Section 4, we confine ourselves to the case of a 5$\times$5 matrix which depends on three discrete independent variables. On use of the classical correspondence between lines in a three-dimensional projective space and points in the four-dimensional Pl\"ucker quadric \cite{Plucker65,OnishchikSulanke06},  we demonstrate that the corresponding $M$-system governs privileged integrable line complexes in $\C\P^3$. These {\em fundamental line complexes}, which are termed rectilinear congruences in \cite{DoliwaSantiniManas00}, are characterised by the property of intersecting neighbouring lines and a particular planarity property of the points of intersection. It is noted that the planarity property holds automatically in the case of the afore-mentioned line complexes in $\C\P^4$. It turns out that all fundamental line complexes are encoded in the discrete $M$-system. In order to show this, we make use of a geometric construction of fundamental line complexes which owes its existence to Desargues' classical theorem of projective geometry \cite{Coxeter87}. In fact, this construction reveals that any ``elementary cube'' of a fundamental line complex should be regarded as being embedded in a classical spatial point-line configuration $(15_4\,20_3)$ of 15 points and 20 lines \cite{Baker22}. It is observed that some of the theorems about fundamental line complexes set down in this section turn out to be important in the construction of supercyclidic nets \cite{BobenkoHuhnenVenedeyRoerig14}.

In Section 5, we conclude the paper by characterising fundamental line complexes in terms of correlations \cite{OnishchikSulanke06,SempleKneebone52} of the ambient projective space $\C\P^3$ which interchange ``opposite'' lines of any elementary cube of a fundamental line complex. This characterisation is based on a known theorem (cf.\ \cite{Carver05}) which states that any generic hexagon in $\C\P^3$ uniquely defines an involutive correlation, namely a polarity, which maps any edge to its ``opposite'' counterpart.

\section{A 3D discrete integrable system}

The geometries presented in this paper are integrable in that they are multi-dimensionally consistent in the sense of integrable discrete differential geometry~\cite{BobenkoSuris09}. In algebraic terms, this is readily verified once one has established that their algebraic incarnations constitute canonical reductions of a fundamental system of discrete integrable equations. Thus, we consider three finite sets of ``lower'' and ``upper'' indices
\bela{G1}
L = \{1,\ldots,N\},\quad U^{\rm l}\supset L,\quad U^{\rm r}\supset L
\ela
and associated scalar functions
\bela{G2}
 \bear{c}
  M^{ik} : \Z^N\rightarrow \C,\quad i\in U^{\rm l},\quad k\in U^{\rm r}\as
 (n_1,\ldots,n_N)\mapsto M^{ik}(n_1,\ldots,n_N).
 \ear
\ela
These obey the system of discrete equations
\bela{G3}
  M^{ik}_l = M^{ik} - \frac{M^{il}M^{lk}}{M^{ll}},\quad l\in L\backslash\{i,k\}
\ela
which we term {\em $M$-system}.
Here, and in the following, we suppress the arguments of any discrete function $f$ and indicate increments of the independent variables by lower indices. For instance, if $N=3$ then
\bela{G4}
  f = f(n_1,n_2,n_3),\quad f_1 = f(n_1+1,n_2,n_3),\quad f_{23} = f(n_1,n_2+1,n_3+1).
\ela
It is easy to see that the above system of discrete equations is consistent since the compatibility conditions ${(M_l^{ik})}_m = {(M_m^{ik})}_l$ are satisfied so that the Cauchy data for the $M$-system are \mbox{2-dimensional}. For the same reason, the $M$-system may be extended consistently to a system of equations of the same type on any larger lattice $\Z^{\tilde{N}}$ with
\bela{G5}
  \tilde{U}^{{\rm l},{\rm r}} = U^{{\rm l},{\rm r}}\cup \{N+1,\ldots,\tilde{N}\}
\ela
so that, for fixed $N+1,\ldots,\tilde{N}$, one obtains solutions of the original system (\ref{G3}) and an increment of any of the variables $N+1,\ldots,\tilde{N}$ corresponds to a B\"acklund transformation \cite{BobenkoSuris09,RogersSchief02} of (\ref{G3}).  Thus, the $M$-system constitutes a multi-dimensionally consistent 3D (integrable) system. It is noted that, in order to avoid a re-labelling of the upper indices on the functions $M^{ik}$, we have assumed without loss of generality that $N+1,\ldots,\tilde{N}\not\in U^{{\rm l},{\rm r}}$. 

As indicated below, there exists a variety of connections with the geometric and algebraic theory of integrable systems both discrete and continuous which illustrates the universal nature of the $M$-system (\ref{G3}).

\subsection{The Darboux and Fundamental transformations}

It is well-known that composition of the classical Darboux transformation \cite{Darboux82} and its adjoint leads to a compact binary Darboux transformation formulated in terms of ``squared eigenfunctions'' \cite{MatveevSalle91}. Thus, if $(\phi^k,\phi^l)$ and $(\psi^i,\psi^l)$ are pairs of solutions of the time-dependent Schr\"odinger equation and its adjoint
\bela{G6}
  \phi_t = \phi_{xx} + u\phi,\quad -\psi_t = \psi_{xx} + u\psi,
\ela
where $u=u(x,t)$ constitutes a given potential, then, up to additive constants of integration, bilinear potentials $M^{\alpha\beta}$ are uniquely defined by the compatible pairs
\bela{G7}
  M^{\alpha\beta}_x = \psi^\alpha\phi^\beta,\quad M^{\alpha\beta}_t = \psi^\alpha\phi^\beta_x - \psi^\alpha_x\phi^\beta
\ela
with $\alpha\in\{i,l\}$, $\beta\in\{k,l\}$ and the quantities
\bela{G8}
  \phi^k_l = \phi^k - \phi^l\frac{M^{lk}}{M^{ll}},\quad \psi^i_l = \psi^i - \psi^l\frac{M^{il}}{M^{ll}}
\ela
are again solutions of the Schr\"odinger equations (\ref{G6}) corresponding to the new potential
\bela{G9}
  u_l = u + 2(\ln M^{ll})_{xx}.
\ela
The new squared eigenfunction $M^{ik}_l$ associated with this pair is then readily verified to be \cite{OevelSchief93}
\bela{G10}
  M^{ik}_l = M^{ik} - \frac{M^{il}M^{lk}}{M^{ll}}
\ela
up to a constant of integration. It is important to stress that, algebraically, there is no difference between the transformation formulae (\ref{G8}) and (\ref{G10}). Indeed, if one regards the (adjoint) eigenfunctions $\phi^\beta$ and $\psi^\alpha$ as carrying a ``hidden'' index and sets $M^{0\beta}=\phi^\beta$, $M^{\alpha0}=\psi^\alpha$ then (\ref{G8}) is precisely of the same form as (\ref{G10}).

The algebraic structure of the superposition formula (\ref{G10}) coincides with that of the discrete dynamical system (\ref{G3}). In fact, if one regards the lower index $l$ in (\ref{G10}) as a shift on a lattice and iterates the above binary Darboux transformation then one may generate solutions of system (\ref{G3}). The transformation formulae (\ref{G8}), (\ref{G10}) are universal in that they apply to a large class of linear evolution equations in 1+1 dimensions with potentials $M^{\alpha\beta}$ being defined by suitable analogues of the pair (\ref{G7}) (see, e.g., \cite{OevelSchief93}). Moreover, if one considers vector-valued solutions of the hyperbolic equation
\bela{G11}
  \phi_{xy} = a\phi_x + b\phi_y
\ela
then (\ref{G8})$_1$ represents nothing but the classical Fundamental transformation \cite{Eisenhart62,RogersSchief02} for conjugate nets encoded in (\ref{G11}). The transformation formulae (\ref{G8})$_1$ and (\ref{G10}) have also been shown to apply in the case of the canonical discrete analogue of the Fundamental transformation \cite{DoliwaSantiniManas00,ManasDoliwaSantini97}.

\subsection{Conjugate lattices}

We now consider the case $U^{\rm l} = \{0,1,\ldots,N\}$, $U^{\rm r} = \{1,\ldots,N+d\}$ and introduce the vector notation
\bela{G12}
 \bear{rl}
  \br = & (M^{0k})_{k=N+1,\dots,N+d}\as
  \bM^i = & (M^{ik})_{k=N+1,\ldots,N+d},\quad i=1,\ldots,N.
 \ear
\ela
The $M$-system (\ref{G3}) then translates into 
\bela{G13}
  \br_l = \br - \frac{M^{0l}}{M^{ll}}\bM^l,\quad\bM^i_l = \bM^i - \frac{M^{il}}{M^{ll}}\bM^l.
\ela
Hence, the quantities $\bM^l$ may be regarded as the ``tangent vectors'' of an \mbox{$N$-dimensional} quadrilateral lattice
\bela{G14}
  \br : \Z^N\rightarrow\C^d
\ela
in a $d$-dimensional Euclidean space. Moreover, system (\ref{G13})$_2$ implies that the quadrilaterals are planar and, hence, $\br$ constitutes a conjugate lattice \cite{BobenkoSuris09}. Importantly, it may be shown that all conjugate lattices may be obtained in this manner (cf.\ Section 4.2.3). In view of the previous section, the appearance of conjugate lattices is consistent with the classical and well-known observation that iteration of the classical Fundamental transformation generates planar quadrilaterals \cite{Bianchi2327} and therefore conjugate lattices. Thus, one may regard system (\ref{G3}) as the algebraic essence of the classical Fundamental transformation without reference to geometry. Important algebraic properties of the fundamental $M$-system are presented below.

\section{Minors, \boldmath$\tau$-function and the hexahedron recurrence}

It turns out that the link between the algebraic and geometric perspectives adopted in this paper is provided by algebraic identities for the minors of the matrix $\cal M$ composed of the scalars~$M^{ik}$. Thus, for any two multi-indices \mbox{$A=(a_1\cdots a_s)$} and $B=(b_1\cdots b_s)$, where the entries of each multi-index are assumed to be distinct elements of $U^{\rm l}$ and $U^{\rm r}$ respectively, we may define the minors
\bela{G14a}
  M^{A,B} = \det (M^{a_\alpha b_\beta})_{\alpha,\beta=1,\ldots, s}
\ela
of $\calM$ with $M^{\emptyset,\emptyset}=1$.

\subsection{Jacobi-type identities and identification of a metric}

In terms of the above minors, Jacobi's classical identity for determinants \cite{Hirota03} may be expressed as
\bela{G14c}
M^{A,B}M^{a\bar{a}A,b\bar{b}B} - M^{aA,bB}M^{\bar{a}A,\bar{b}B} + M^{\bar{a}A,bB}M^{aA,\bar{b}B} = 0.
\ela
A compact form of this quadratic identity is given by 
\bela{G14d}
  \langle\Wsf^{a\bar{a},b\bar{b}},\Wsf^{a\bar{a},b\bar{b}}\rangle = 0,
\ela
where 
\bela{G14e}
\Wsf^{a\bar{a},b\bar{b}} = (M^{A,B}, M^{a\bar{a}A,b\bar{b}B},M^{aA,bB},M^{\bar{a}A,\bar{b}B},M^{\bar{a}A,bB},M^{aA,\bar{b}B})
\ela
and the inner product is taken with respect to the block-diagonal metric
\bela{G14f}
  \diag\left[\left(\bear{cc}0&1\\1&0\ear\right),-\left(\bear{cc}0&1\\1&0\ear\right),\left(\bear{cc}0&1\\1&0\ear\right)\right].
\ela
In the above, it is assumed that all upper-left indices and all upper-right indices are distinct so that the associated minors are well defined. 

The identity
\bela{G14g}
 \bear{rl}
  &M^{A,B}M^{\hat{a}a\bar{a}A,\hat{b}b\bar{b}B} + M^{a\bar{a}A,b\bar{b}B}M^{\hat{a}A,\hat{b}B}\as
 -&M^{aA,bB}M^{\hat{a}\bar{a}A,\hat{b}\bar{b}B} - M^{\bar{a}A,\bar{b}B}M^{\hat{a}aA,\hat{b}bB}\as
 +&M^{\bar{a}A,bB}M^{\hat{a}aA,\hat{b}\bar{b}B} + M^{aA,\bar{b}B}M^{\hat{a}\bar{a}A,\hat{b}bB} = 0
\ear
\ela
constitutes an algebraic consequence of the Jacobi identity and proves equally important in the subsequent analysis. Its validity is readily verified by formulating it as
\bela{G14h}
  \langle\Wsf^{a\bar{a},b\bar{b}},\Wsf^{\hat{a}a\bar{a},\hat{b}b\bar{b}}\rangle = 0,
\ela
where
\bela{G14i}
\Wsf^{\hat{a}a\bar{a},\hat{b}b\bar{b}} = (M^{\hat{a}A,\hat{b}B}, M^{\hat{a}a\bar{a}A,\hat{b}b\bar{b}B},M^{\hat{a}aA,\hat{b}bB},M^{\hat{a}\bar{a}A,\hat{b}\bar{b}B},M^{\hat{a}\bar{a}A,\hat{b}bB},M^{\hat{a}aA,\hat{b}\bar{b}B}).
\ela
Indeed, four applications of the Jacobi identity (\ref{G14c}) show that the vector
\bela{G14j}
 \Delta\Wsf = M^{\hat{a}A,\hat{b}B}\Wsf^{a\bar{a},b\bar{b}} - M^{A,B}\Wsf^{\hat{a}a\bar{a},\hat{b}b\bar{b}}
\ela
is of the form
\bela{G14k}
  \Delta\Wsf = (0, *, M^{aA,\hat{b}B}M^{\hat{a}A,bB},M^{\bar{a}A,\hat{b}B}M^{\hat{a}A,\bar{b}B},M^{\bar{a}A,\hat{b}B}M^{\hat{a}A,bB},M^{aA,\hat{b}B}M^{\hat{a}A,\bar{b}B})
\ela
so that it is seen that
\bela{G14l}
 \langle\Delta\Wsf,\Delta\Wsf\rangle = 0.
\ela
The latter is equivalent to (\ref{G14h}) since  $\Wsf^{a\bar{a},b\bar{b}}$ and $\Wsf^{\hat{a}a\bar{a},\hat{b}b\bar{b}}$ are null vectors with respect to the metric (\ref{G14f}). 

A degenerate case of the identity (\ref{G14g}) is obtained by (temporarily) assuming that the rows of the matrix $\calM$ labelled by $a$ and $\hat{a}$ are identical. In this case, three of the terms in (\ref{G14g}) vanish and we are left with
\bela{G14m}
 M^{a\bar{a}A,b\bar{b}B}M^{aA,\hat{b}B} - M^{aA,bB}M^{a\bar{a}A,\hat{b}\bar{b}B} + M^{aA,\bar{b}B}M^{a\bar{a}A,\hat{b}bB} = 0.
\ela
In the same manner as above, it may now be shown that
\bela{G14n}
  \langle\Wsf^{\hat{a}a\bar{a},\hat{b}b\bar{b}},\Wsf^{\hat{a}a\bar{a},\tilde{b}b\bar{b}}\rangle = 0.
\ela
Here, (\ref{G14m}) plays the role of the Jacobi identity (\ref{G14c}). It is evident that, for reasons of symmetry, the identity 
\bela{G14o}
  \langle\Wsf^{\hat{a}a\bar{a},\hat{b}b\bar{b}},\Wsf^{\tilde{a}a\bar{a},\hat{b}b\bar{b}}\rangle = 0
 \ela
likewise holds. The geometric significance of the identities (\ref{G14d}), (\ref{G14h}) and (\ref{G14n}), (\ref{G14o}) is addressed in Section 4.

\subsection{Evolution of the minors and the \boldmath $\tau$-function}

For any $l\in L$ which is not contained in two multi-indices $A$ and $B$, a Laplace expansion of $M^{lA,lB}$ with respect to the row or column labelled by $l$ shows that
\bela{G14b}
  M^{A,B}_l = \frac{M^{lA,lB}}{M^{l,l}}.
\ela
Hence, if $A$ and $B$ constitute simple indices then the $M$-system (\ref{G3}) is retrieved. Accordingly, system (\ref{G3}) may be reinterpreted as the integrable evolution (\ref{G14b}) of the minors of the matrix $\calM$. It is interesting to note that if one sets aside the genesis of the system (\ref{G14b}) then (\ref{G14b}) still constitutes a consistent system for some quantities $M^{A,B}$ since the compatibility condition 
${(M^{A,B}_l)}_m = {(M^{A,B}_m)}_l$ is readily seen to be satisfied. Hence, (\ref{G14a}) may be regarded as {\em a} realisation of the quantities $M^{A,B}$ in terms of the minors of a matrix $\calM$ which evolves according to (\ref{G3}). Whether there exist other realisations is the subject of current research.

In the particular case $U^{\rm l} = U^{\rm r} = L$, the above evolution of the minors gives rise to a compact formulation of the evolution of a $\tau$-function associated with the $M$-system. Thus, it is readily verified that the compatibility conditions ${(\tau_i)}_k = {(\tau_k)}_i$ which guarantee the existence of a function $\tau$ defined according to
\bela{G14p}
  \tau_i = M^{ii}\tau
\ela
are satisfied modulo the $M$-system (\ref{G3}). In fact, it turns out that
\bela{G14q}
  \tau_{ik} = M^{ik,ik}\tau = \left|\bear{cc}M^{ii} & M^{ik}\\ M^{ki} & M^{kk}\ear\right|\tau,
\ela
which is indeed symmetric in the distinct indices $i$ and $k$. In general, if we consider the multi-index $A=(a_1\cdots a_\alpha)$ with distinct entries then the evolution (\ref{G14b}) immediately implies that
\bela{G14r}
  \tau_A = M^{A,A}\tau.
\ela
In particular, three distinct unit increments result in
\bela{G14s}
  \tau_{ikl} = \left|\bear{ccc} M^{ii} & M^{ik} & M^{il}\\ M^{ki} & M^{kk} & M^{kl}\\ M^{li} & M^{lk} & M^{ll}\ear\right|\tau.
\ela
It is noted that any binary Darboux transformation (continuous or discrete) which admits the superposition principle (\ref{G10}) gives rise to determinantal formulae of the above type if the application of a binary Darboux transformation is regarded as a shift on a lattice (see, e.g., \cite{PHD,AratynNissimovPacheva98,Manas01}). 

\subsection{The hexahedron recurrence}

We conclude this section with an application of the $\tau$-function. A so-called `hexahedron recurrence' has been proposed by Kenyon and Pemantle \cite{KenyonPemantle13} which admits a natural interpretation in terms of cluster algebras and dimer configurations \cite{GoncharovKenyon13}. Here, it is demonstrated that the hexahedron recurrence is but another avatar of the $M$-system for $U^{\rm l} = U^{\rm r} = L = \{1,2,3\}$. Thus, four functions 
\bela{Z1}
  h,h^x,h^y,h^z:\Z^3\rightarrow\C
\ela
are said to satisfy the hexahedron recurrence if these are solutions of the coupled system of difference equations
\bela{Z2}
  \bear{rl}
   h_1^xh^xh = & h^xh^yh^z + h_1h_2h_3 + hh_1h_{23}\as
   h_2^yh^yh = & h^xh^yh^z + h_1h_2h_3 + hh_1h_{13}\as
   h_3^zh^zh = & h^xh^yh^z + h_1h_2h_3 + hh_1h_{12}\as
   h_{123}h^2h^xh^yh^z = & (h^xh^yh^z)^2 + h^xh^yh^z(2h_1h_2h_3 + hh_1h_{23} + hh_2h_{13} + hh_3h_{12})\as
&\mbox{}+ (h_1h_2 + hh_{12})(h_1h_3 + hh_{13})(h_2h_3 + hh_{23}).
  \ear
\ela
We now introduce the quantities
\bela{Z3}
  \Delta^x = h_2h_3 + hh_{23},\quad \Delta^y = h_1h_3 + hh_{13},\quad \Delta^z = h_1h_2 + hh_{12}
\ela
which are seen to constitute key components in the system (\ref{Z2}). In particular, the third-order equation (\ref{Z2})$_4$ may be cast into the form
\bela{Z4}
  \frac{h_{123}}{h} = \frac{h^xh^yh^z}{h^3} + \frac{h_1\Delta^x}{h^3} + \frac{h_2\Delta^y}{h^3} + \frac{h_3\Delta^z}{h^3} - \frac{h_1h_2h_3}{h^3} + \frac{\Delta^x\Delta^y\Delta^z}{h^3h^xh^yh^z}
\ela
which suggests that its right-hand side may be written as a 3$\times$3 determinant. Accordingly, we make the change of variables
\bela{Z5}
  M^{23} = -\frac{h^x}{h},\quad M^{31} = -\frac{h^y}{h},\quad M^{12} = -\frac{h^z}{h}
\ela
and express one and two distinct unit increments of the arguments of $h$ in terms of the functions
\bela{Z6}
  M^{11} = \frac{h_1}{h},\quad M^{22} = \frac{h_2}{h},\quad M^{33} = \frac{h_3}{h}
\ela
and
\bela{Z7}
 M^{32} = -\frac{\Delta^x}{hh^x},\quad  M^{13} = -\frac{\Delta^y}{hh^y},\quad M^{21} = -\frac{\Delta^z}{hh^z}
\ela
respectively. The latter two sets of relations may be formulated as
\bela{Z8}
  h_i = M^{ii}h,\quad h_{ik} = -\left|\bear{cc}M^{ii} & M^{ik}\\ M^{ki} & M^{kk}\ear\right|h
\ela
and the hexahedron recurrence relation (\ref{Z4}) becomes
\bela{Z9}
h_{123} = -\left|\bear{ccc} M^{11} & M^{12} & M^{13}\\ M^{21} & M^{22} & M^{23}\\ M^{31} & M^{32} & M^{33}\ear\right|h.
\ela
On use of the definitions (\ref{Z5}), the remaining hexahedron recurrence relations (\ref{Z2})$_{1,2,3}$ may be written as
\bela{Z10}
  M^{23}_1 = M^{32} - \frac{M^{31}M^{12}}{M^{11}}
\ela
and its analogues obtained by cyclically interchanging the indices. Moreover, comparison of (\ref{Z8})$_1$ shifted in the direction $n_k$ and (\ref{Z8})$_2$ yields
\bela{Z11}
  M^{ii}_k = -M^{ii} + \frac{M^{ik}M^{ki}}{M^{kk}},
\ela
while (\ref{Z8})$_2$ shifted in the direction $n_l$, $l\neq i,k$ and matched with (\ref{Z9}) produces
\bela{Z10a}
  M^{32}_1 = M^{23} - \frac{M^{21}M^{13}}{M^{11}}
\ela
and its two counterparts. Hence, the functions $M^{ik}$ evolve according to
\bela{Z11a}
  M^{ik}_l = M^{ki} - \frac{M^{kl}M^{li}}{M^{ll}},\quad i\neq k\neq l\neq i.
\ela
Finally, if the ``transposition'' operator $\mathcal{C}$ is defined by
\bela{Z12}
  \mathcal{C}M^{ik} = M^{ki}
\ela
then the successive change of variables
\bela{Z13}
 \bear{rll}
  M^{ik} &\rightarrow \mathcal{C}^nM^{ik},&\quad n= n_1+n_2+n_3\as
  M^{ik} & \rightarrow (-1)^{n_i+n_k}M^{ik},&\quad (i,k)\in\{(1,2),(2,3),(3,1)\}\as
  M^{ii} & \rightarrow (-1)^{n_k+n_l}M^{ii},&\quad  i\neq k\neq l\neq i
 \ear
\ela
transforms the system (\ref{Z11}), (\ref{Z11a}) into the $M$-system (\ref{G3}) for $U^{\rm l} = U^{\rm r} = L = \{1,2,3\}$ and the function
\bela{Z14}
  \tau = (-1)^{n_1n_2+n_2n_3+n_3n_1} h
\ela
constitutes the corresponding $\tau$-function obeying the linear system (\ref{G14p}) and its higher-order implications (\ref{G14q}) and (\ref{G14s}).

One of the intriguing properties of the hexahedron recurrence (\ref{Z2}) is that the solution remains positive if the initial data are positive. Unfortunately, this important property is hidden in the $M$-system avatar. However, the advantage of the latter formulation is that multi-dimensional consistency is automatically guaranteed. In this connection, it is noted that the cube recurrence, that is, the discrete BKP (Miwa) equation, may be formulated in such a manner that positivity is guaranteed but this property does not hold on higher-dimensional lattices \cite{AdlerBobenkoSuris03}. 

The hexahedron recurrence admits a reduction to a single third-order ``recurrence'' obtained by Kashaev \cite{Kashaev96} in the context of star-triangle moves in the Ising model. The existence of this recurrence is readily verified if one exploits its formulation in terms of the $M$-system. Indeed, it is evident that the $M$-system considered above admits the reduction $M^{ik}=M^{ki}$ corresponding to a symmetric matrix $\calM$. Elimination of its entries $M^{ik}$ between the linear system (\ref{G14q}) and (\ref{G14s}) has been shown to lead to the discrete CKP (dCKP) equation \cite{Schief03b}
\bela{Z15}
(\tau\tau_{123} + \tau_1\tau_{23} - \tau_2\tau_{13} - \tau_3\tau_{12})^2 - 4(\tau_{12}\tau_{13} - \tau_1\tau_{123})(\tau_2\tau_3 - \tau\tau_{23}) = 0.
\ela
By construction, the left-hand side is symmetric in the indices and, remarkably, turns out to be Cayley's 2$\times$2$\times$2 hyperdeterminant \cite{GelfandKapranovZelevinsky94}. Furthermore, the dCKP equation interpreted as a local relation between the principal minors of a symmetric matrix $\calM$ has been derived as a characteristic property by Holtz and Sturmfels \cite{HoltzSturmfels07} in connection with the `principal minor assignment problem'. This remarkable interpretation naturally places this equation on a multi-dimensional lattice and implies its multi-dimensional consistency \cite{TsarevWolf09}.

\section{Integrable discrete line complexes in \boldmath $\C\P^3$}



In the remainder of this paper, we are concerned with the case $N=3$ and \mbox{$U^{\rm l}=U^{\rm r}=\{1,2,3,4,5\}$}. Hence, the associated $M$-system (\ref{G3}) governs the evolution of the matrix
\bela{G15}
  \calM = \left(\bear{ccccc} M^{11}&M^{12}&M^{13}&M^{14}&M^{15}\\
                            M^{21}&M^{22}&M^{23}&M^{24}&M^{25}\\
                            M^{31}&M^{32}&M^{33}&M^{34}&M^{35}\\
                            M^{41}&M^{42}&M^{43}&M^{44}&M^{45}\\
                            M^{51}&M^{52}&M^{53}&M^{54}&M^{55}
  \ear\right)
\ela
as a function of $n_1,n_2,n_3$. More precisely, if we prescribe the Cauchy data
\bela{G16}
  M^{ik}(S^{ik}),\quad S^{ik} = \{(n_1,n_2,n_3) :  n_l=0,\, l\not\in\{i,k\}\}
\ela
at the origin and on appropriate coordinate lines and planes $S^{ik}$ then $\calM$ is uniquely determined by the evolution equations (\ref{G3}).

\subsection{From algebra to geometry}

In order to reveal the geometry encoded in the matrix $\calM$, we focus on the sub-matrix
\bela{G17}
  \hat{\calM} = \left(\bear{cc} M^{44}&M^{45}\\ M^{54}&M^{55}\ear\right)
\ela
which is uniquely determined by its value at the origin and the Cauchy data associated with the remaining matrix entries. 

\subsubsection{Identification of a Pl\"ucker quadric}

If we define the vector-valued function
\bela{G18}
  \Vsf = (M^{\emptyset,\emptyset},M^{45,45},M^{4,4},M^{5,5},M^{5,4},M^{4,5})
\ela
which is composed of all minors of the matrix $\hat{\calM}$ then the functional relationship 
\bela{G19}
  M^{\emptyset,\emptyset}M^{45,45} - M^{4,4}M^{5,5} + M^{5,4}M^{4,5} = 0 
\ela
between these minors may be expressed as
\bela{G20}
  \langle\Vsf,\Vsf\rangle = 0.
\ela
The latter elementary identity is a special case of the Jacobi identity (\ref{G14d}), wherein $A=B=\emptyset$ and $(a,\bar{a})=(b,\bar{b})=(4,5)$. By virtue of the signature of the metric (\ref{G14f}), it is therefore natural to regard $\Vsf$ as a function
\bela{G22}
  \Vsf : \Z^3 \rightarrow \C^{3,3}
\ela
which represents particular homogeneous coordinates of a lattice of points in a four-dimensional quadric $Q^4$ embedded in a five-dimensional complex projective space $\P(\C^{3,3})$. If we identify the quadric $Q^4$ with the complexification of the classical Pl\"ucker quadric \cite{OnishchikSulanke06} then, on use of the Pl\"ucker correspondence (cf.\ Section 4.2.1), we may interpret any point $[\Vsf(\n)]\in Q^4$ as a line $\lsf(\n)\subset\C\P^3$ in a three-dimensional complex projective space. Here, $[\mathsf{X}]\in\P(\C^{3,3})$ refers to a point in projective space with associated homogeneous coordinates $\mathsf{X}\in\C^{3,3}$. Accordingly, $\Vsf$ may be identified with a {\em (discrete) line complex}
\bela{G22a}
 \lsf : \Z^3\rightarrow \{\mbox{lines in $\C\P^3$}\},
\ela
that is, a three-parameter family of lines which are combinatorially attached to the vertices of $\Z^3$ as indicated schematically in figure \ref{linecomplex}.
\begin{figure}
\centerline{\includegraphics[scale=0.5]{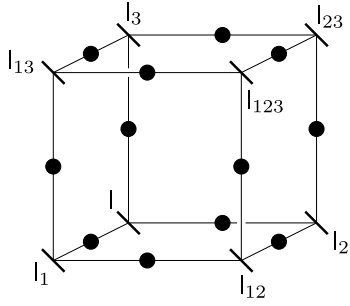}}
\caption{A combinatorial picture of a line complex $\lsf$ which admits the ``intersection property''. The lines are combinatorially attached to the vertices of a $\Z^3$ lattice. Any two neighbouring lines intersect in a point which may be associated with the corresponding edge.}
\label{linecomplex}
\end{figure} 

\subsubsection{Incidence of lines}

In order to uncover the geometric properties of the line complex $\lsf$ defined by $\Vsf$, we first observe that a shift of $\Vsf$ in any lattice direction may be expressed elegantly in terms of higher-order minors of $\calM$. Indeed,``raising indices'' by means of the identity (\ref{G14b}) produces
\bela{G23}
  \Vsf_l = (M^{l,l},M^{l45,l45},M^{l4,l4},M^{l5,l5},M^{l5,l4},M^{l4,l5})/M^{l,l}
\ela
so that setting $A=B=\emptyset$ and $(\hat{a},a,\bar{a}) = (\hat{b},b,\bar{b})=(l,4,5)$ in (\ref{G14h}) yields
\bela{G24}
 \langle \Vsf_l,\Vsf\rangle = 0.
\ela
In geometric terms, this orthogonality condition expresses the fact that the associated lines $\lsf$ and $\lsf_l$ intersect. Thus, the line complex $\lsf$ has the property that the two lines which are combinatorially attached to the two vertices of any edge of $\Z^3$ intersect as depicted in figure \ref{linecomplex}.

The above geometric property implies that the four lines $\lsf,\lsf_l,\lsf_m$ and $\lsf_{lm}$ with $l\neq m$ form a (skew) quadrilateral (see figure \ref{diagonals}).
\begin{figure}
\centerline{\includegraphics[scale=0.5]{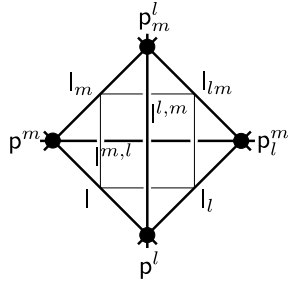}}
\caption{The relationship between the lines $\lsf$, the points of intersection $\psf^l$ and the diagonals $\lsf^{m,p}$.}
\label{diagonals}
\end{figure}
Its diagonals $\lsf^{l,m}$ and $\lsf^{m,l}$ may be obtained by solving the linear system
\bela{G25}
  \langle\Vsf^{*,*},\Vsf\rangle = \langle\Vsf^{*,*},\Vsf_l\rangle = \langle\Vsf^{*,*},\Vsf_m\rangle = \langle\Vsf^{*,*},\Vsf_{lm}\rangle = 0
\ela
subject to the condition $\langle\Vsf^{*,*},\Vsf^{*,*}\rangle=0$. On raising indices by means of (\ref{G14b}), this linear system may be formulated as
\bela{G26}
  \langle\Vsf^{*,*},\Vsf\rangle = \langle\Vsf^{*,*},\Vsf^{l,l}\rangle = \langle\Vsf^{*,*},\Vsf^{m,m}\rangle = \langle\Vsf^{*,*},\Vsf^{lm,lm}\rangle = 0,
\ela
where
\bela{G27}
 \Vsf^{C,D} = (M^{C,D},M^{C45,D45},M^{C4,D4},M^{C5,D5},M^{C5,D4},M^{C4,D5})
\ela
for any multi-indices $C$ and $D$ such that the relevant minors are well defined. Up to scaling, the two null vector solutions turn out to be
\bela{G28}
  \Vsf^{l,m},\quad \Vsf^{m,l}.
\ela
Indeed, for instance, the identity (\ref{G14h}) for $(a,\bar{a})=(b,\bar{b})=(4,5)$ and $\hat{a}=l$, $\hat{b}=m$ with \mbox{$A=B=\emptyset$} and $\hat{a}=m$, $\hat{b}=l$ with $A=l,\,B=m$ coincides with $\langle\Vsf^{l,m},\Vsf\rangle=0$ and \mbox{$\langle\Vsf^{l,m},\Vsf^{lm,lm}\rangle=0$} respectively. On the other hand, $\langle\Vsf^{l,m},\Vsf^{l,l}\rangle=0$ due to the identity (\ref{G14n}) for \mbox{$(a,\bar{a})=(b,\bar{b})=(4,5)$} and $\hat{a}=l,\,\hat{b}=l,\, \tilde{b}=m$ with $A=B=\emptyset$. Similarly, the remaining vanishing inner product $\langle\Vsf^{l,m},\Vsf^{m,m}\rangle=0$ is a consequence of the identity (\ref{G14o}). Finally, for reasons of symmetry, the same arguments apply in the case of the solution $\Vsf^{m,l}$. 

\subsubsection{Coplanarity and concurrency properties}

In order to analyse the properties of the diagonals $\lsf^{l,m}$, we label the points of intersection of any two neighbouring lines by
\bela{G29}
 [\psf^l] = \lsf\cap\lsf_l
\ela
and state the following lemma which will be proven in the next section. It is noted that, for brevity, we make no distinction between a point of intersection $[\psf^l]\in\C\P^3$ and its (particular) homogeneous coordinates $\psf^l\in\C^4$ whenever this cannot lead to any confusion.

\begin{lemma}\label{diagonal}
The diagonal $\lsf^{l,m}$ passes through the points of intersection $\psf^l$ and~$\psf^l_m$.
\end{lemma}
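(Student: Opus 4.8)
The plan is to make the Pl\"ucker correspondence of Section~4.2.1 fully explicit, read off the homogeneous coordinates of $\psf^l$ and of its shift $\psf^l_m$, and then verify directly that both points are incident with the line carrying the Pl\"ucker vector $\Vsf^{l,m}$.

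First I would fix homogeneous coordinates on $\C\P^3$ so that the line $\lsf$, with Pl\"ucker vector $\Vsf=(M^{\emptyset,\emptyset},M^{45,45},M^{4,4},M^{5,5},M^{5,4},M^{4,5})$, is spanned by the two points $(1,0,M^{44},M^{54})$ and $(0,1,M^{45},M^{55})$, that is, by the columns of the $4\times2$ matrix whose upper block is the $2\times2$ identity and whose lower block is $\hat{\calM}$; the six $2\times2$ minors of this matrix reproduce the components of $\Vsf$ up to the signs prescribed by the metric (\ref{G14f}). Under the same identification the diagonal $\lsf^{l,m}$ acquires explicit Pl\"ucker coordinates $p_{ij}$ read off from the components of $\Vsf^{l,m}$ in (\ref{G27}).

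Next I would compute $\psf^l=\lsf\cap\lsf_l$. The $M$-system (\ref{G3}) furnishes the lower entries of $\lsf_l$ as $M^{ik}_l=M^{ik}-M^{il}M^{lk}/M^{ll}$ with $i,k\in\{4,5\}$, so intersecting the two parametrised lines reduces to the single condition $\alpha M^{l4}+\beta M^{l5}=0$ on the parameters $(\alpha,\beta)$ along $\lsf$, whence $(\alpha,\beta)\propto(M^{l5},-M^{l4})$ and, up to scale, $\psf^l=(M^{l,5},-M^{l,4},-M^{l4,45},-M^{l5,45})$, all of whose entries are minors of $\calM$. By the minor-evolution rule (\ref{G14b}) the shifted point is then obtained simply by prepending the index $m$ to every multi-index, giving $\psf^l_m\propto(M^{ml,m5},-M^{ml,m4},-M^{ml4,m45},-M^{ml5,m45})$ after discarding the common factor $M^{m,m}$.

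Finally I would check, for $x=\psf^l$ and for $x=\psf^l_m$, the four incidence relations $x_ip_{jk}-x_jp_{ik}+x_kp_{ij}=0$ (over the triples $0\le i<j<k\le3$) that express that $x$ lies on the line with Pl\"ucker coordinates $p=\Vsf^{l,m}$. After a single Laplace expansion of the $3\times3$ minors that appear, each of these eight relations collapses to the vanishing of a $3\times3$ determinant having two coincident rows (the two rows labelled $l$), or---after factoring out an entry of $\calM$---two coincident columns, multiplied in the shifted case by the scalar $M^{m,m}$; hence all eight hold identically. The conceptual crux, and the only genuine obstacle, is recognising these incidence relations as Laplace expansions of such degenerate determinants; once the frame and the sign conventions of (\ref{G14f}) are fixed, the verification is purely mechanical. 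A more geometric alternative would start from the already-established orthogonalities $\langle\Vsf^{l,m},\Vsf\rangle=\langle\Vsf^{l,m},\Vsf^{l,l}\rangle=\langle\Vsf^{l,m},\Vsf^{m,m}\rangle=\langle\Vsf^{l,m},\Vsf^{lm,lm}\rangle=0$, which show that $\lsf^{l,m}$ meets each of the four edges $\lsf,\lsf_l,\lsf_m,\lsf_{lm}$; identifying it as the diagonal through the opposite vertices $\psf^l$ and $\psf^l_m$ would then require ruling out the other common transversals of the skew quadrilateral, a nondegeneracy step less transparent than the computation above.
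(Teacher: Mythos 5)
Your proposal is correct in substance and is essentially the paper's own argument: in Section 4.2.1 the paper fixes the frame (\ref{G37}), computes $\psf^l\sim(M^{l,4},M^{l,5},M^{l4,54},M^{l5,54})$ in (\ref{G42}) (your coordinates agree with these modulo your different, but legitimate, choice of frame), obtains $\psf^l_m$ by raising indices via (\ref{G14b}), and then verifies the claim by means of the Pl\"ucker correspondence (\ref{G33})--(\ref{G35}) together with the identities of Section 3.1; whether one forms the join of the two points and compares its Pl\"ucker vector with $\Vsf^{l,m}$ (the paper) or checks the point--line incidence relations for each point separately (your version) is an inessential difference. Your closing remark is also apt: the orthogonality relations (\ref{G26}) alone only exhibit $\lsf^{l,m}$ as a common transversal of the skew quadrilateral and cannot distinguish the diagonal through $\psf^l,\psf^l_m$ from the one through $\psf^m,\psf^m_l$, which is exactly why the coordinate computation is needed.

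Two concrete inaccuracies in your final step should be flagged. First, it is not true that all eight incidence relations reduce, after Laplace expansion, to determinants with two coincident rows or columns. For $\psf^l$, the two relations that avoid the component $M^{l45,m45}$ of $\Vsf^{l,m}$ do collapse in this way (they are expansions of $3\times3$ arrays with the row $l$ repeated); but the two relations involving it, for instance
\[
  M^{l,5}M^{l45,m45}-M^{l4,m5}M^{l5,45}+M^{l5,m5}M^{l4,45}=0,
\]
are genuine Desnanot--Jacobi (Sylvester) identities: this is precisely (\ref{G14c}) with $A=(l)$, $B=(5)$, $(a,\bar{a})=(4,5)$, $(b,\bar{b})=(m,4)$, and the relations for the shifted point $\psf^l_m$ likewise require (\ref{G14c}) and its consequences with non-empty $A$, $B$. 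Their terms mix $1\times1$ with $3\times3$ and $2\times2$ with $2\times2$ minors, so they cannot arise from a single Laplace expansion of any degenerate determinant; the mechanism you single out as ``the only genuine obstacle'' would therefore leave four of your eight relations unproved. Since the needed identities are exactly ``the identities of Section 3.1'' that the paper invokes, nothing is missing from the toolbox, but the justification must be amended. Second, your frame for $\lsf$ is not the paper's (\ref{G37}): the dictionary it induces between the components of $\Vsf^{C,D}$ in (\ref{G27}) and the Pl\"ucker coordinates is related to (\ref{G34}) by a signed permutation (an isometry of the metric (\ref{G14f})), not a mere change of signs, and it must be applied consistently to the points and to $\Vsf^{l,m}$ --- with mixed conventions the incidence relations are simply false. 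With these repairs your verification goes through and coincides with the paper's proof.
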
 

It is natural to associate the point of intersection of any two neighbouring lines with the edge connecting the corresponding vertices of the $\Z^3$ lattice. Thus, if we consider four edges of the same ``type'' of an elementary cube then, according to the above lemma, the associated points of intersection $\psf^l,\psf^l_m,\psf^l_p$ and $\psf^l_{mp}$ may be regarded as the vertices of a quadrilateral with edges $\lsf^{l,m},\lsf^{l,p}$ and $\lsf^{l,m}_p,\lsf^{l,p}_m$ (cf.~figure \ref{fundamental}). 
\begin{figure}
\centerline{\includegraphics[scale=0.5]{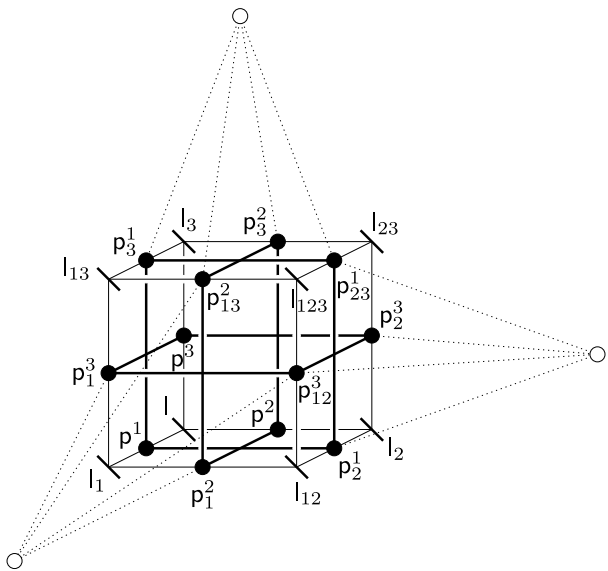}}
\caption{A combinatorial picture of a fundamental line complex $\lsf$. The four points of intersection attached to any four edges of the same ``type'' are coplanar. This is equivalent to the concurrency of any four ``diagonals'' of the same ``type'' indicated by the dotted lines.}
\label{fundamental}
\end{figure} 
Remarkably, this quadrilateral is planar since, for instance, the lines $\lsf^{l,m}$ and $\lsf^{l,m}_p$ intersect as may be concluded from
\bela{G30}
  \langle\Vsf^{l,m},\Vsf^{l,m}_p\rangle \sim \langle\Vsf^{l,m},\Vsf^{pl,pm}\rangle = 0
\ela
by virtue of the identity (\ref{G14h}). Furthermore, since the lines $\lsf^{l,m}$ and $\lsf^{p,m}$ lie in the plane spanned by the lines $\lsf$ and $\lsf_m$, the former lines likewise intersect. Algebraically, this is confirmed by
\bela{G31}
  \langle\Vsf^{l,m},\Vsf^{p,m}\rangle = 0
\ela
which, once again, constitutes a special case of the identity (\ref{G14o}). In fact, it is evident that, under appropriate genericity assumptions, the four diagonals of the same ``type'' $\lsf^{l,m},\lsf^{l,m}_p$ and $\lsf^{p,m},\lsf^{p,m}_l$ must be concurrent as indicated in figure \ref{fundamental}. Hence, we are led to privileged line complexes which have been termed rectilinear congruences in \cite{DoliwaSantiniManas00}. In the current context, we adopt the following terminology.

\begin{definition}
A line complex $\lsf:\Z^3\rightarrow\{\mbox{lines in $\C\P^3$}\}$ is termed {\em fundamental} if any neighbouring lines $\lsf$ and $\lsf_l$ intersect and the points of intersection $\psf^l$ enjoy the {\em coplanarity property}, that is, for any distinct $l,m$ and $p$, the quadrilaterals $(\psf^l,\psf^l_m,\psf^l_{mp},\psf^l_p)$ are planar. Equivalently, the lines passing through the points $\psf^l$ and $\psf^l_m$ admit the {\em concurrency property}, that is, the four lines connecting the points $\psf^l,\psf^p,\psf^l_p,\psf^p_l$ and their shifted counterparts $\psf^l_m,\psf^p_m,\psf^l_{pm},\psf^p_{lm}$ are concurrent.
\end{definition}

\begin{remark}\label{oneimpliesfive}
It is readily verified that if any one of the six coplanarity and concurrency conditions associated with an elementary cube of a line complex is satisfied then the other five conditions automatically hold.
\end{remark}

The analysis presented in the preceding may therefore be summarised as follows.

\begin{theorem}\label{fundamentaltheorem}
Any solution $\mathcal{M}$ of the $M$-system (\ref{G3}) with $N=3$ and $U^{\rm l}=U^{\rm r}=\{1,2,3,4,5\}$ parametrising a function $\Vsf$ according to (\ref{G18}) encapsulates a fundamental line complex $\lsf$ via the Pl\"ucker correspondence $\Vsf\leftrightarrow\lsf$.
\end{theorem}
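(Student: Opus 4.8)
The plan is to verify the two defining clauses of a fundamental line complex directly, using the Pl\"ucker dictionary recorded in Section 4.1.1: a point $[\Vsf]\in Q^4$ represents a line $\lsf\subset\C\P^3$, and two such lines are incident precisely when the associated minor-vectors are orthogonal with respect to the bilinear form polarising the Pl\"ucker quadric, namely the metric (\ref{G14f}). Every incidence to be proved has already been reduced, in the preceding subsections, to the vanishing of such an inner product, and each of these is in turn an instance of one of the Jacobi-type identities (\ref{G14d}), (\ref{G14h}), (\ref{G14n}) or (\ref{G14o}). The proof thus consists in invoking these identities in the correct order.

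First I would settle the intersection property. That $[\Vsf(\n)]$ genuinely lies on $Q^4$, and hence represents a line, is the relation $\langle\Vsf,\Vsf\rangle=0$ of (\ref{G20}), itself the special case $A=B=\emptyset$, $(a,\bar{a})=(b,\bar{b})=(4,5)$ of the Jacobi identity (\ref{G14d}). The incidence of any two neighbouring lines $\lsf$ and $\lsf_l$ is then exactly the orthogonality (\ref{G24}), $\langle\Vsf_l,\Vsf\rangle=0$, which is the instance of (\ref{G14h}) with $A=B=\emptyset$ and $(\hat{a},a,\bar{a})=(\hat{b},b,\bar{b})=(l,4,5)$. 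This secures the first clause of the definition for every lattice direction $l$.

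For the coplanarity clause I would argue geometrically, with Lemma~\ref{diagonal} as the crucial input. By that lemma the diagonal $\lsf^{l,m}$ passes through $\psf^l$ and $\psf^l_m$; shifting the statement in the direction $p$ shows that $\lsf^{l,m}_p$ passes through $\psf^l_p$ and $\psf^l_{mp}$. Hence all four vertices of the quadrilateral $(\psf^l,\psf^l_m,\psf^l_{mp},\psf^l_p)$ lie on the two lines $\lsf^{l,m}$ and $\lsf^{l,m}_p$. These lines meet, by the orthogonality (\ref{G30}) (again an instance of (\ref{G14h})), and two incident lines span a plane; therefore the four points are coplanar. As $l,m,p$ range over all distinct triples this yields every coplanarity condition of the elementary cube, and by Remark~\ref{oneimpliesfive} the equivalent concurrency conditions --- whose algebraic underpinning is (\ref{G31}) together with (\ref{G14o}) --- follow automatically. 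Thus $\lsf$ is a fundamental line complex.

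Since the requisite algebraic identities are already in hand and the Pl\"ucker translation is standard, the synthesis above is essentially routine; the genuine content sits in Lemma~\ref{diagonal}, whose proof is postponed to the next section. I therefore expect the real obstacle to lie in that lemma rather than in the present theorem --- concretely, in showing that the null solutions $\Vsf^{l,m}$ of the linear system (\ref{G25}) encode lines which pass through the prescribed intersection points $\psf^l$ and $\psf^l_m$, and not merely lines transversal to the four sides of the skew quadrilateral formed by $\lsf,\lsf_l,\lsf_m,\lsf_{lm}$.
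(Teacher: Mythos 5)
Your proposal is correct and follows essentially the same route as the paper: the intersection property via the specialisations of the Jacobi-type identities (\ref{G14d}) and (\ref{G14h}), and the coplanarity property by observing that, thanks to Lemma~\ref{diagonal}, the four points $\psf^l,\psf^l_m,\psf^l_{mp},\psf^l_p$ lie on the two diagonals $\lsf^{l,m}$ and $\lsf^{l,m}_p$, which intersect by (\ref{G30}) and hence span a plane. Your closing remark also matches the paper's structure: the theorem is indeed stated as a summary of this analysis, with the substantive verification deferred to the proof of Lemma~\ref{diagonal} in Section 4.2.1.
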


\subsection{From geometry to algebra}

In this section, we demonstrate that the converse of Theorem \ref{fundamentaltheorem} is also true, that is, all fundamental line complexes are algebraically represented by the $M$-system (\ref{G3}).

\subsubsection{The Pl\"ucker correspondence}

The classical Pl\"ucker correspondence is established by considering a lift $\asf\wedge\bsf$ of a line $\lsf$ in $\C\P^3$, that is, by introducing homogeneous coordinates
\bela{G33}
  \asf = \left(\bear{c}\alpha^0\\ \alpha^1\\ \alpha^2\\ \alpha^3\ear\right),\quad 
  \bsf = \left(\bear{c}\beta^0\\ \beta^1\\ \beta^2\\ \beta^3\ear\right)
\ela
of two points on the line $\lsf$ and setting
\bela{G34}
  \Vsf = (\gamma^{01},\gamma^{23},\gamma^{02},\gamma^{13},\gamma^{03},\gamma^{12}),
\ela
where the coefficients $\gamma^{\mu\nu}$ are defined by the sub-determinants
\bela{G35}
  \gamma^{\mu\nu} = \det\left(\bear{cc} \alpha^\mu & \beta^\mu\\ \alpha^\nu & \beta^\nu\ear\right).
\ela
It is easy to verify that the $\gamma^{\mu\nu}$ obey the classical Pl\"ucker identity
\bela{G36}
  \gamma^{01}\gamma^{23} - \gamma^{02}\gamma^{13} + \gamma^{03}\gamma^{12} = 0
\ela
so that the components of $\Vsf$ indeed constitute homogeneous coordinates of a point in the Pl\"ucker quadric~$Q^4$. Specifically, in the generic case, we may make the choice
\bela{G37}
  \asf = \left(\bear{c}0\\ 1\\ M^{44} \\ M^{54}\ear\right),\quad 
  \bsf = \left(\bear{c}-1\\ 0\\ M^{45}\\ M^{55}\ear\right)
\ela
so that
\bela{G38}
  \Vsf = \left(1,\left|\bear{cc}M^{44}&M^{45}\\ M^{54}&M^{55}\ear\right|,M^{44},M^{55},M^{54},M^{45}\right)
\ela
is precisely of the form (\ref{G18}). However, at this stage, $M^{44},M^{55},M^{54}$ and $M^{45}$ are merely labels of the ``non-trivial'' homogeneous coordinates of the points $\asf$ and~$\bsf$.

We now focus on the lines of a fundamental line complex $\lsf$. Since any neighbouring lines $\lsf$ and $\lsf_l$ intersect, the points $\asf,\bsf$ and $\asf_l,\bsf_l$ must be coplanar. This may be expressed as
\bela{G39}
  N^{l5}\Delta_l\asf = N^{l4}\Delta_l\bsf
\ela
due to the constancy of the first two components of $\asf$ and $\bsf$. Here, $N^{l4}$ and $N^{l5}$ are functions to be determined and $\Delta_lf = f_l - f$. Accordingly, the points of intersection $\psf^l$ are given by
\bela{G40}
  \psf^l = N^{l5}\asf - N^{l4}\bsf.
\ela
As an illustration, we (temporarily) consider a fundamental line complex parame\-trised by a solution of the $M$-system as stated in Theorem \ref{fundamentaltheorem}. In this case, the evolution of the matrix (\ref{G17}) may be formulated as
\bela{G41}
  \Delta_l\asf = -\frac{M^{l4}}{M^{ll}}\Msf^l,\quad \Delta_l\bsf = -\frac{M^{l5}}{M^{ll}}\Msf^l,\quad
  \Msf^l = \left(\bear{c}0\\ 0\\ M^{4l}\\ M^{5l}\ear\right)
\ela
so that
\bela{G42}
  \psf^l\sim M^{l5}\asf - M^{l4}\bsf = \left(\bear{c}M^{l4}\\ M^{l5}\\ M^{l5}M^{44}-M^{l4}M^{45}\\ M^{l5}M^{54}-M^{l4}M^{55}\ear\right) = \left(\bear{c}M^{l,4}\\ M^{l,5}\\ M^{l4,54}\\ M^{l5,54}\ear\right).
\ela
On use of the Pl\"ucker correspondence (\ref{G33})-(\ref{G35}) and the identities of Section 3.1, it is then straightforward to verify that the line passing through the points of intersection $\psf^l$ and $\psf^l_m$ is indeed given by the diagonal $\lsf^{l,m}$ as stated in Lemma~\ref{diagonal}.

\subsubsection{Fundamental line complexes in \boldmath $\C\P^4$}

In order to make the transition from a fundamental line complex to a solution of the $M$-system, we first observe that, as pointed out in \cite{DoliwaSantiniManas00}, the inclusion of the coplanarity property or, equivalently, the concurrency property in the definition of a fundamental line complex is due to the dimensionality of the ambient space of the line complexes discussed here. Thus:

\begin{definition}
A line complex $\lsf:\Z^3\rightarrow\{\mbox{lines in $\C\P^4$}\}$ is termed {\em fundamental} if any neighbouring lines $\lsf$ and $\lsf_l$ intersect.
\end{definition}

Indeed, the two sets of four lines $\{\lsf,\lsf_1,\lsf_2,\lsf_{12}\}$ and $\{\lsf_3,\lsf_{13},\lsf_{23},\lsf_{123}\}$ span two (three-dimensional) hyperplanes which, generically, intersect in a (two-dimensional) plane. The latter plane contains the points $\psf^3,\psf^3_{1},\psf^3_{2},\psf^3_{12}$ so that the set of points $\psf^3$ automatically enjoys the coplanarity property.

It turns out that fundamental line complexes in $\C\P^3$ may be regarded as projections of fundamental line complexes in $\C\P^4$.

\begin{theorem}\label{projectiontheorem}
A line complex in $\C\P^3$ is fundamental if and only if it may be regarded as a projection onto a hyperplane of a fundamental line complex in $\C\P^4$.
\end{theorem}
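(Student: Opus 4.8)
The plan is to prove the two implications separately, with sufficiency being routine and necessity carrying the content. For sufficiency, suppose $\hat\lsf$ is a fundamental line complex in $\C\P^4$ and let $\pi$ be a central projection onto a hyperplane $\C\P^3$ from a generic centre. Since $\pi$ maps lines to lines and preserves incidence, the images $\pi\circ\hat\lsf$ of neighbouring lines again intersect. Moreover, by the observation recorded in the paragraph preceding the theorem, the points of intersection of a fundamental complex in $\C\P^4$ automatically enjoy the coplanarity property: the two skew quadrilaterals of an elementary cube span two hyperplanes whose intersection is a plane containing the four connecting points $\psf^3,\psf^3_1,\psf^3_2,\psf^3_{12}$. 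As a projective map sends coplanar points to coplanar points, $\pi\circ\hat\lsf$ inherits both the intersection and the coplanarity properties and is therefore fundamental in $\C\P^3$.

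For necessity I would start from a fundamental complex $\lsf$ in a hyperplane $\C\P^3=H\subset\C\P^4$ and construct a lift. Using the Pl\"ucker setup of Section~4.2.1, represent each line by the two points $\asf,\bsf$ of (\ref{G37}), normalised so that $\alpha^0,\alpha^1,\beta^0,\beta^1$ are constant. Then the intersection property assumes the form (\ref{G39}), $N^{l5}\Delta_l\asf=N^{l4}\Delta_l\bsf$, with both increments lying in the coordinate subspace $x^0=x^1=0$; equivalently $\Delta_l\asf=N^{l4}\Msf^l$ and $\Delta_l\bsf=N^{l5}\Msf^l$ for a common direction vector $\Msf^l$. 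To lift into $\C\P^4=\P(\C^5)$ I would append a fifth coordinate, seeking scalar functions $\alpha^4,\beta^4:\Z^3\to\C$ so that $\hat\asf=(\asf,\alpha^4)$ and $\hat\bsf=(\bsf,\beta^4)$ define lines whose projection from the centre $[0:0:0:0:1]$ returns $\lsf$. A direct rank computation on the four lifted points shows that neighbouring lifted lines intersect precisely when the fifth coordinates obey the same relation $N^{l5}\Delta_l\alpha^4=N^{l4}\Delta_l\beta^4$, that is, $\Delta_l\alpha^4=N^{l4}\mu^l$ and $\Delta_l\beta^4=N^{l5}\mu^l$ for a scalar $\mu^l$ extending $\Msf^l$ to $\hat\Msf^l=(\Msf^l,\mu^l)$.

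The construction therefore reduces to an integrability statement: the pair $(\alpha^4,\beta^4)$ exists exactly when the discrete one-forms with increments $N^{l4}\mu^l$ and $N^{l5}\mu^l$ are closed, i.e.\ $\Delta_m(N^{l4}\mu^l)=\Delta_l(N^{m4}\mu^m)$ together with its counterpart carrying the index $5$. These are precisely the compatibility conditions already met by the two nontrivial components of $\Msf^l$, inherited from the consistency of $\asf$ and $\bsf$, so $\mu^l$ must solve the same linear recurrence; the substance is to produce a solution \emph{linearly independent} of those two components, so that the lift genuinely spans $\C\P^4$ rather than collapsing into a hyperplane. I expect the main obstacle to be proving that such an independent solution exists if and only if the coplanarity property holds, i.e.\ that the vanishing of the coplanarity determinant $\det(\psf^l,\psf^l_m,\psf^l_p,\psf^l_{mp})$ is equivalent to this closedness. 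Sufficiency already supplies one implication, since any lift projects to a fundamental complex and thereby forces coplanarity; the real work is the converse, namely extracting the fifth coordinate directly from the coplanar data by a rank/determinant computation, \emph{without} presupposing an $M$-system representation (which would be circular at this stage). Once $\mu^l$ is obtained, $\alpha^4$ and $\beta^4$ follow by discrete integration, the lifted complex $\hat\lsf$ has intersecting neighbours by construction and projects back to $\lsf$, completing the argument.
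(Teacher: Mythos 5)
Your sufficiency direction is fine and matches the paper's (implicit) argument: a projection preserves incidence and coplanarity, and coplanarity upstairs is automatic by the hyperplane-intersection observation. The problem is the necessity direction, where you set up the lift correctly --- normalising via (\ref{G37}), reducing intersection of the lifted lines to the existence of scalars $\mu^l$ with $\Delta_l\alpha^4=N^{l4}\mu^l$, $\Delta_l\beta^4=N^{l5}\mu^l$ --- but then explicitly defer the decisive step: proving that the coplanarity property of $\lsf$ is equivalent to the existence of a solution $\mu^l$ of the resulting closedness conditions that is independent of the two components of $\Msf^l$. You write that this is ``the real work'' and that you ``expect'' it to follow from a rank/determinant computation, but no such computation (or even a strategy for one) is given. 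Since that equivalence is the entire content of the hard direction of Theorem \ref{projectiontheorem}, the proposal as it stands is an incomplete outline rather than a proof; your instinct that one may not invoke the $M$-system here (circularity, since the paper derives the $M$-system from this very theorem in Section 4.2.3) is correct, but it makes the missing step all the more essential.

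The paper closes exactly this gap synthetically rather than analytically. Given an elementary cube of the fundamental complex in the hyperplane, it fixes a generic projection $\pi$, prescribes five preimage points $\hat{\psf}^2_3,\hat{\psf}^3_2,\hat{\psf}^1_2,\hat{\psf}^2_1,\hat{\psf}^3_1$ on the fibres of $\pi$, and propagates incidences step by step: these points determine lifted lines $\hat{\lsf}_{23},\hat{\lsf}_2,\hat{\lsf}_{12},\hat{\lsf}_1$, which in turn force unique lifted points $\hat{\psf}^1,\hat{\psf}^2$, hence $\hat{\lsf}$, then $\hat{\psf}^3$, $\hat{\lsf}_3$, $\hat{\psf}^1_3$ and $\hat{\lsf}_{13}$; the lifted hexagon of six lines in $\C\P^4$ then admits a unique transversal $\hat{\lsf}_{123}$. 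The crucial closing move is Theorem \ref{desarguestheorem}: since projection preserves coplanarity, the projected eight lines form a fundamental cube sharing seven lines with the original one, and the Desargues-based uniqueness of the eighth line forces $\pi(\hat{\lsf}_{123})=\lsf_{123}$; the construction is then iterated over the lattice using the Cauchy data (\ref{G58}). In other words, the geometric input (Desargues' theorem) that you would need to manufacture inside your rank computation is precisely what your proposal lacks; either supply that computation in full or import Theorem \ref{desarguestheorem} as the paper does, otherwise the argument does not close.
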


\begin{proof}
Given a fundamental line complex in $\C\P^3$ which we regard as being embedded in a hyperplane of $\C\P^4$, we consider an ``elementary cube'' of 8 lines $\lsf,\ldots,\lsf_{123}$ and associated points of intersection. We begin by choosing a generic projection $\pi$ and prescribing five (black) points $\hat{\psf}^2_3,\hat{\psf}^3_2,\hat{\psf}^1_2,\hat{\psf}^2_1,\hat{\psf}^3_1$ such that $\pi(\hat{\psf}^l_m) = \psf^l_m$ for $(l,m)\neq(1,3)$ as depicted in figure \ref{projectionproof}. 
\begin{figure}
\centerline{\includegraphics[scale=0.5]{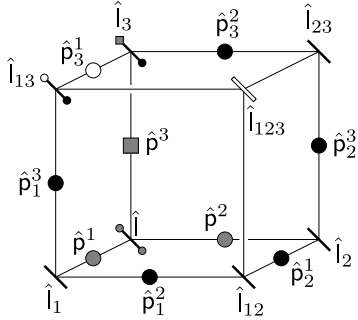}}
\caption{Illustration of the proof that fundamental line complexes in $\C\P^3$ may be regarded as projections of fundamental line complexes in $\C\P^4$.}
\label{projectionproof}
\end{figure}
These points define the four lines $\hat{\lsf}_{23},\hat{\lsf}_2,\hat{\lsf}_{12},\hat{\lsf}_1$ which project onto the lines $\lsf_{23},\lsf_2,\lsf_{12},\lsf_1$ respectively. Since $[\psf^1]\in\lsf_1$ and $[\psf^2]\in\lsf_2$, the lines $\hat{\lsf}_1$ and $\hat{\lsf}_2$ uniquely determine the (grey) points $\hat{\psf}^1$ and $\hat{\psf}^2$ lying on $\hat{\lsf}_1$ and $\hat{\lsf}_2$ respectively and obeying $\pi(\hat{\psf}^1)=\psf^1$ and $\pi(\hat{\psf}^2)=\psf^2$. The line $\hat{\lsf}$ passing through $\hat{\psf}^1$ and $\hat{\psf}^2$ and projecting onto the line $\lsf$ gives rise, in turn, to the point $\hat{\psf}^3$ (grey square) lying on $\hat{\lsf}$ such that $\pi(\hat{\psf}^3)=\psf^3$. The latter point and $\hat{\psf}^2_3$ define the line $\hat{\lsf}_3$ projecting onto $\lsf_3$. Once again, since $[\psf^1_3]\in\lsf_3$, there exists a unique (white) point $\hat{\psf}^1_3$ lying on $\hat{\lsf}_3$ which projects onto $\psf^1_3$. Finally, by construction, the line $\hat{\lsf}_{13}$ passing through $\hat{\psf}^1_3$ and $\hat{\psf}^3_1$ projects onto the line $\lsf_{13}$.

As discussed in Section 4.3, the six lines $\hat{\lsf}_m,\hat{\lsf}_{mp}\subset\C\P^4$ assumed to be in general position uniquely determine a transversal (boxed) line $\hat{\lsf}_{123}$. Since projection preserves the coplanarity property, the 8 lines $\hat{\lsf},\ldots,\hat{\lsf}_{123}$ projected onto the hyperplane form an elementary cube of a fundamental line complex in $\C\P^3$. However, since this elementary cube and the original elementary cube share 7 lines, the two eighth lines must also coincide, that is, $\pi(\hat{\lsf}_{123})=\lsf_{123}$. The assertion that 7 lines of an elementary cube of a fundamental line complex in $\C\P^3$ uniquely determine the eighth line is the content of Theorem \ref{desarguestheorem} which is proven in Section 4.3. It remains to observe that the above arguments also apply (iteratively) to complete fundamental line complexes in light of the Cauchy problems for fundamental line complexes formulated in the same section.
\end{proof}

\subsubsection{A conjugate lattice connection}

For any pair of fundamental line complexes related by a projection in the sense of the above theorem, we identify the associated hyperplane with the ``hyperplane at infinity''. Hence, the transition from a fundamental line complex in $\C\P^3$ to a corresponding fundamental line complex in $\C\P^4$ merely amounts to adding an appropriate fifth homogeneous coordinate so that the coplanarity condition (\ref{G39}) admits the counterpart
\bela{G44}
   N^{l5}\Delta_l\tilde{\asf} = N^{l4}\Delta_l\tilde{\bsf},\quad \tilde{\asf} = \left(\bear{c}0\\ 1\\ M^{44} \\ M^{54}\\ M^{64}\ear\right),\quad 
  \tilde{\bsf} = \left(\bear{c}-1\\ 0\\ M^{45}\\ M^{55}\\ M^{65}\ear\right)
\ela
with the {\em same} functions $N^{l4}$ and $N^{l5}$. The latter may be resolved by introducing discrete ``tangent vectors'' $\bM^l$ according to
\bela{G45}
  \Delta_l\ba = N^{l4}\bM^l,\quad \Delta_l\bb = N^{l5}\bM^l,\quad \ba = \left(\bear{c}M^{44}\\ M^{54}\\ M^{64}\ear\right),\quad \bb = \left(\bear{c}M^{45}\\ M^{55}\\ M^{65}\ear\right).
\ela
Here, we have confined ourselves to the three-dimensional non-trivial part of the coplanarity condition (\ref{G44}). Thus, formally, the vector-valued functions
\bela{G46}
  \ba : \Z^3\rightarrow\C^3,\quad \bb : \Z^3\rightarrow\C^3
\ela
represent two quadrilateral lattices in a complex three-dimensional Euclidean space with the property that corresponding edges are parallel. It is well known that, in the generic case, the quadrilaterals of lattices of this type must be planar and, hence, by definition, the conjugate lattices $\ba$ and $\bb$ constitute Combescure transforms of each other \cite{KonopelchenkoSchief98}.

The planarity of the quadrilaterals of $\ba$ and $\bb$ may be expressed as
\bela{G47}
  \bM^l_m= I^{ml}\bM^l + N^{ml}\bM^m,\quad l\neq m
\ela
with associated compatibility conditions ${(\bM^l_m)}_p={(\bM^l_p)}_m$, leading to
\bela{G48}
 \bear{rl}
    & I^{ml}_p(I^{pl}\bM^l + N^{pl}\bM^p) + N^{ml}_p(I^{pm}\bM^m + N^{pm}\bM^p) \as
 = &  I^{pl}_m(I^{ml}\bM^l + N^{ml}\bM^m) + N^{pl}_m(I^{mp}\bM^p + N^{mp}\bM^m).
 \ear
\ela
Under the non-degeneracy assumption of linearly independent tangent vectors $\bM^1,\bM^2,\bM^3$, we therefore conclude that, in particular,
\bela{G49}
    I^{ml}_p I^{pl} =  I^{pl}_m I^{ml}.
\ela
It is observed that these relations may be interpreted as the algebraic incarnation of Theorem~\ref{projectiontheorem}. Thus, if the vectors $\bM^l$ were associated with a line complex in $\C\P^3$ and therefore two-dimensional then the expansions (\ref{G47}) would still be valid but equating to zero the coefficients multiplying the tangent vectors in (\ref{G48}) would not be justified. The conditions (\ref{G49}) give rise to the parametrisation
\bela{G50}
  I^{ml} = \frac{\varphi^l_m}{\varphi^l}
\ela
in terms of potentials $\varphi^l$. These may be used to scale the coefficients $I^{ml}$ to unity by applying the gauge transformation $\bM^l\rightarrow\varphi^l\bM^l$. Hence, we may assume without loss of generality that $I^{ml}=1$. The remaining compatibility conditions then reduce to the nonlinear system
\bela{G51}
  N^{ml}_p = \frac{N^{ml} + N^{mp}N^{pl}}{1- N^{mp}N^{pm}}.
\ela
The latter constitutes a standard discretisation of the classical Darboux equations governing equivalence classes of Combescure transforms of conjugate coordinate systems \cite{KonopelchenkoBogdanov95,DoliwaSantini97}. For any fixed solution $N^{lm}, \bM^l$ of the discrete Darboux system and the linear system (\ref{G47}), two Combescure transforms are given by $\ba$ and $\bb$, where the coefficients $N^{l4}$ and $N^{l5}$ are solutions of the same linear system
\bela{G52}
  N^{lk}_m=\frac{N^{lk} + N^{lm}N^{mk}}{1 - N^{lm}N^{ml}},\quad k=4,5,
\ela
namely the compatibility conditions associated with the linear system (\ref{G45}). It is noted that the two systems (\ref{G51}) and (\ref{G52}) are identical in form.

The final step in the identification of the $M$-system in question is based on the observation that the discrete Darboux system admits the ``conservation laws''
\bela{G53}
  \Xi^{lm}\Xi^{lp}_m = \Xi^{lp}\Xi^{lm}_p,\quad \Xi^{lm} = 1 - N^{lm}N^{ml}.
\ela
Accordingly, there exist associated potentials $M^{ll}$ defined by the linear equations
\bela{G54}
  M^{ll}_m = (1 - N^{lm}N^{ml})M^{ll}.
\ela
Hence, if we introduce the parametrisation
\bela{G55}
  N^{lm} = -\frac{M^{lm}}{M^{ll}},\quad \bM^l = \left(\bear{c}M^{4l}\\ M^{5l}\\ M^{6l}\ear\right)
\ela
then the systems (\ref{G45}), (\ref{G47}), (\ref{G51}), (\ref{G52}) and (\ref{G54}) may be combined to obtain the $M$-system
\bela{G56}
 \bear{c}\displaystyle
  M^{ik}_l = M^{ik} -\frac{M^{il}M^{lk}}{M^{ll}},\quad l\not\in\{i,k\}\AS
 i\in\{1,2,3,4,5,6\},\quad k\in\{1,2,3,4,5\},\quad l\in\{1,2,3\}.
 \ear
\ela
Since the coefficients $M^{6k}$ are merely auxiliary functions which represent the transition of a fundamental line complex from $\C\P^3$ to $\C\P^4$, we are now in a position to state the converse of Theorem \ref{fundamentaltheorem}.

\begin{theorem}\label{conversetheorem}
Any fundamental line complex in $\C\P^3$ gives rise to a solution $\mathcal{M}$ of the $M$-system (\ref{G3}) with $N=3$ and $U^{\rm l}=U^{\rm r}=\{1,2,3,4,5\}$ via the lift $\asf\wedge\bsf$ of the lines $\lsf$ encapsulated in (\ref{G37}).
\end{theorem}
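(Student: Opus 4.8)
The plan is to synthesise the construction carried out in Section 4.2.3 into a single argument, and then to verify by index bookkeeping that the quantities it produces solve the $M$-system. I would start with an arbitrary fundamental line complex $\lsf$ in $\C\P^3$ and adopt the lift (\ref{G37}), so that $M^{44},M^{45},M^{54},M^{55}$ are at first merely labels for the non-trivial homogeneous coordinates of the two points $\asf,\bsf$ on each line. Since neighbouring lines intersect and the first two components of $\asf,\bsf$ are constant, the coplanarity relation (\ref{G39}) holds and defines the coefficients $N^{l4},N^{l5}$. Invoking Theorem \ref{projectiontheorem}, I would realise $\lsf$ as the projection of a fundamental line complex in $\C\P^4$; concretely this amounts to adjoining a fifth homogeneous coordinate, with entries $M^{64},M^{65}$, in such a way that the very same coefficients $N^{l4},N^{l5}$ satisfy the lifted coplanarity relation (\ref{G44}).

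The crux of the argument is the passage to the conjugate-lattice picture. Resolving (\ref{G44}) introduces the tangent vectors $\bM^l$ through (\ref{G45}), so that the resulting lattices $\ba,\bb$ in $\C^3$ are Combescure transforms whose planarity is encoded in the expansion (\ref{G47}). Here the three-dimensionality secured by the $\C\P^4$ lift is indispensable: the tangent vectors $\bM^1,\bM^2,\bM^3$ are generically linearly independent, which is exactly what licenses equating the coefficients in the compatibility conditions (\ref{G48}) so as to extract the relations (\ref{G49}). I regard this genericity step as the main obstacle, since it is precisely the place where the argument would break down in $\C\P^3$ alone, where the tangent vectors are two-dimensional; its validity is the algebraic content of Theorem \ref{projectiontheorem}, and everything downstream is comparatively routine.

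Next I would install the two potentials the construction needs. Relation (\ref{G49}) is the integrability condition for potentials $\varphi^l$ with $I^{ml}=\varphi^l_m/\varphi^l$ as in (\ref{G50}); applying the gauge $\bM^l\to\varphi^l\bM^l$ normalises $I^{ml}=1$, whereupon the residual compatibility conditions reduce to the discrete Darboux systems (\ref{G51}) and (\ref{G52}). These imply the conservation law (\ref{G53}), which is in turn the integrability condition for a second potential $M^{ll}$ defined by (\ref{G54}). With both potentials available, I would impose the parametrisation (\ref{G55}), $N^{lm}=-M^{lm}/M^{ll}$ and $\bM^l=(M^{4l},M^{5l},M^{6l})$.

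It then remains only to confirm that each governing relation becomes an instance of the $M$-system, which I would check block by block over the index pairs $(i,k)$ with $i\in\{1,\dots,6\}$, $k\in\{1,\dots,5\}$: equation (\ref{G45}) gives $M^{ik}_l=M^{ik}-M^{il}M^{lk}/M^{ll}$ for $i\in\{4,5,6\}$, $k\in\{4,5\}$; the normalised expansion (\ref{G47}) gives it for $i\in\{4,5,6\}$, $k\in\{1,2,3\}$; using (\ref{G54}) to clear the shifted denominator, the Darboux systems (\ref{G51}) and (\ref{G52}) give it for off-diagonal $i,k\in\{1,2,3\}$ and for $i\in\{1,2,3\},\,k\in\{4,5\}$; and (\ref{G54}) itself is the diagonal case $i=k$. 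This exhibits the full $6\times5$ array as a solution of (\ref{G56}). Finally, since the sixth row enters only through the auxiliary entries $M^{6k}$ and never feeds back into the evolution of the remaining rows, the $5\times5$ principal block closes under (\ref{G3}); by construction its entries $M^{44},M^{45},M^{54},M^{55}$ are exactly the labels furnished by the lift (\ref{G37}), so this $5\times5$ matrix $\mathcal{M}$ is the desired solution encapsulating the original fundamental line complex.
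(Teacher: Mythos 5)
Your proposal is correct and follows essentially the same route as the paper's own argument (Section 4.2.3): lifting to $\C\P^4$ via Theorem \ref{projectiontheorem}, resolving the coplanarity condition (\ref{G44}) through the Combescure/conjugate-lattice relations (\ref{G45})--(\ref{G49}), introducing the two potentials via (\ref{G50}) and (\ref{G54}), and combining everything under the parametrisation (\ref{G55}) into the system (\ref{G56}), with the auxiliary sixth row discarded at the end. Your explicit block-by-block index bookkeeping makes precise what the paper compresses into the phrase ``may be combined to obtain the $M$-system,'' but it is the same proof.
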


\subsection{Geometric construction of fundamental line complexes}

An elementary cube of a fundamental line complex $\lsf$ in $\C\P^4$ may be constructed by prescribing a skew hexagon formed by the two triples of lines $\lsf_1,\lsf_2,\lsf_3$ and $\lsf_{12},\lsf_{23},\lsf_{13}$ (cf.\ figure \ref{elementary}). 
\begin{figure}
\centerline{
\begin{minipage}{2cm}
\includegraphics[scale=0.5]{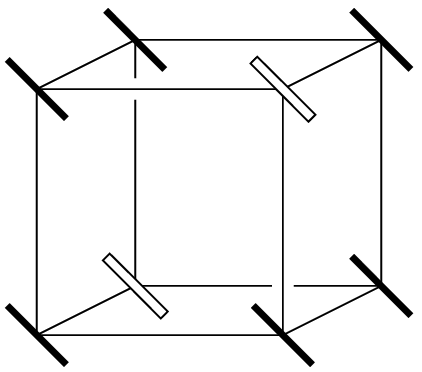}
\end{minipage}\qquad\qquad\qquad
\begin{minipage}{6cm}
\includegraphics[scale=0.5]{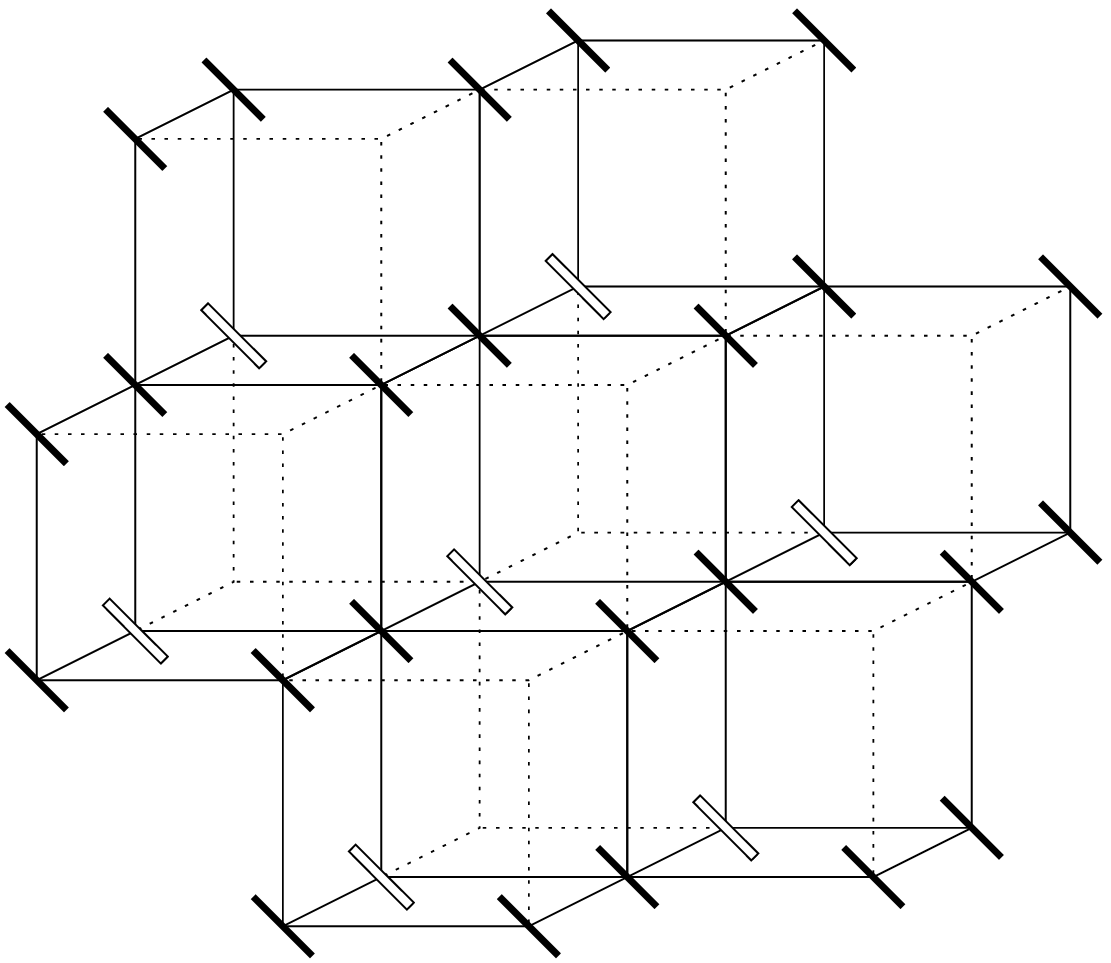}
\end{minipage}
}
\caption{{\sc Left}: Six (black) lines forming a spatial hexagon uniquely determine the remaining two (boxed) lines of an ``elementary cube'' of a fundamental line complex in $\C\P^4$. {\sc Right}: A Cauchy problem for fundamental line complexes. In $\C\P^4$, the boxed lines are determined by the black lines. In $\C\P^3$, the boxed lines constitute additional Cauchy data which have to intersect the relevant triples of black lines.
}
\label{elementary}
\end{figure}
On the assumption that these triples are in general position, the lines $\lsf$ and $\lsf_{123}$ are then uniquely determined by the requirement that these pass through the relevant triple of lines. An entire fundamental line complex is uniquely determined by prescribing a ``plane'' of hexagons as Cauchy data, that is, by specifying the set of lines
\bela{G57}
  \{\lsf(n_1,n_2,n_3) : n_1 + n_2 + n_3 \in\{1,2\}\}
\ela
as illustrated in figure \ref{elementary}.
%
%

In the case of fundamental line complexes in $\C\P^3$, the Cauchy problem becomes non-trivial since the coplanarity property needs to be taken into account. If we prescribe a hexagon of lines as above then the three lines $\lsf_1,\lsf_2,\lsf_3$ may be interpreted as three generators of a unique quadric. The line $\lsf$ then constitutes an element of the second one-parameter family of generators of the quadric. An analogous interpretation is valid in the case of the second triple of lines  $\lsf_{12},\lsf_{23},\lsf_{13}$ and its transversal line $\lsf_{123}$. As shown in \cite{DoliwaSantiniManas00}, the line $\lsf_{123}$ is uniquely determined by the coplanarity property and a fixed choice of the line $\lsf$. In fact, the existence of the line $\lsf_{123}$ may be traced back to the classical Desargues theorem of projective geometry \cite{Coxeter87}.

\begin{theorem}\label{desarguestheorem}
Given seven lines in $\C\P^3$ which are combinatorially attached to seven vertices of an elementary cube and intersect each other ``along edges'', there exists a unique eighth line such that the eight lines constitute an elementary cube of a fundamental line complex.
\end{theorem}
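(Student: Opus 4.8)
The plan is to translate the seven given lines into the projective-geometric data of the preceding subsection and then invoke Desargues' theorem directly. Label the seven prescribed lines so that they correspond to the seven vertices $\mathbf{0},1,2,3,12,23,13$ of an elementary cube, with the eighth (to be constructed) attached to the vertex $123$. By hypothesis, neighbouring lines intersect along edges, so the three lines $\lsf_1,\lsf_2,\lsf_3$ pairwise meet $\lsf$, while $\lsf_{12},\lsf_{23},\lsf_{13}$ pairwise meet their respective neighbours. As already noted in the text, the triple $\lsf_1,\lsf_2,\lsf_3$ may be viewed as three generators of a unique quadric $Q$ of one ruling, and $\lsf$ is a generator of the opposite ruling; likewise $\lsf_{12},\lsf_{23},\lsf_{13}$ determine a second quadric $\tilde{Q}$ whose opposite ruling must contain the sought line $\lsf_{123}$. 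The task is thus to show that the opposite ruling of $\tilde{Q}$ contains a \emph{unique} line meeting all three of $\lsf_{12},\lsf_{23},\lsf_{13}$ and satisfying the coplanarity constraints inherited from the already-fixed line $\lsf$.

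First I would fix notation for the intersection points: writing $[\psf^l]=\lsf\cap\lsf_l$ and the shifted points $\psf^l_m$, the coplanarity property demands that for each fixed ``type'' $l$ the four points $\psf^l,\psf^l_m,\psf^l_p,\psf^l_{mp}$ lie in a plane, equivalently that the four diagonals $\lsf^{l,m}$ of each type be concurrent. Of the six diagonals incident to the eighth line, three—namely those connecting $\psf^l$ to $\psf^l_m$ for the three choices already determined by the seven known lines—are fixed, and each such diagonal must also pass through the corresponding shifted intersection point $\psf^l_{m}$ that lies on $\lsf_{123}$. So the eighth line $\lsf_{123}$ is pinned down by the requirement that it pass through three points, one on each of $\lsf_{12},\lsf_{23},\lsf_{13}$, these three points being forced by the three concurrency (coplanarity) conditions. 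The crux is to verify that these three point-constraints are mutually consistent, i.e.\ that a single line does pass through all three, rather than the generic situation in which three independent points on three skew lines fail to be collinear.

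\textbf{The main obstacle}—and the heart of the theorem—is exactly this consistency, and the key step is to recognise it as an incidence configuration to which Desargues' theorem applies. Concretely, one considers the two ``triangles'' formed in the plane-at-infinity construction (or more intrinsically, the two trihedra of lines meeting at $\lsf$ and at $\lsf_{123}$) and checks that the three coplanarity conditions already imposed on the first two layers of the cube furnish precisely the hypothesis of Desargues (two triangles perspective from a point), whose conclusion (perspective from a line) is the required collinearity of the three forcing points on $\lsf_{12},\lsf_{23},\lsf_{13}$. I would set this up by choosing homogeneous coordinates in which $\lsf$ and its three intersection points are placed conveniently, then write down the three fixed diagonals and intersect each with the appropriate second-layer line to obtain the three candidate points; the Desargues configuration then guarantees these are collinear, defining $\lsf_{123}$ uniquely. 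Uniqueness is automatic once existence is established, since any line of a fundamental complex meeting $\lsf_{12},\lsf_{23},\lsf_{13}$ and respecting coplanarity must pass through the same three forced points, and a line through three distinct collinear points is unique.

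Finally, I would remark that this geometric argument is the synthetic counterpart of the algebraic fact, established in the previous subsection, that the $M$-system propagates the eighth line from Cauchy data; the embedding into the $(15_4\,20_3)$ point-line configuration mentioned in the introduction makes the appearance of Desargues' theorem transparent, since that configuration is precisely the incidence figure generated by the eight lines and their fifteen intersection points. The degenerate cases where the relevant triples fail to be in general position are excluded by the genericity hypothesis, so no separate treatment is needed.
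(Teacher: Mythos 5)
Your overall strategy coincides with the paper's: use the coplanarity property to force, on each of $\lsf_{12},\lsf_{23},\lsf_{13}$, the point through which the eighth line must pass, and then deduce the collinearity of these three points from Desargues' theorem, uniqueness being automatic. However, two steps of your plan are concretely wrong, and the crucial one is missing. First, the candidate points cannot be obtained by ``intersecting the fixed diagonals with the appropriate second-layer line'': a diagonal already determined by the seven lines, such as $\lsf^{3,1}$ through $\psf^3$ and $\psf^3_1$, lies in the plane spanned by $\lsf$ and $\lsf_1$, which $\lsf_{12}$ pierces only at $\psf^2_1$; hence $\lsf^{3,1}$ and $\lsf_{12}$ are generically skew and the proposed intersection is empty. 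The forced point $\psf^3_{12}$ is the intersection of the \emph{plane} through $\psf^3,\psf^3_1,\psf^3_2$ with the line $\lsf_{12}$ (this is the paper's construction); correspondingly, your assertion that the fixed diagonals ``pass through the corresponding shifted intersection point that lies on $\lsf_{123}$'' is false --- only the shifted diagonals, e.g.\ $\lsf^{3,2}_1$ through $\psf^3_1$ and $\psf^3_{12}$, do so, and these are not determined by the seven lines alone.

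Second, the Desargues configuration, which is where all the work of the proof lies, is misidentified and its hypothesis is never verified. The ``two trihedra of lines meeting at $\lsf$ and at $\lsf_{123}$'' do not form triangles: $\lsf_1,\lsf_2,\lsf_3$ are pairwise skew (they meet $\lsf$ in three distinct points), and likewise $\lsf_{12},\lsf_{23},\lsf_{13}$. In the paper the two triangles are triples of intersection points, namely $(\psf^1,\psf^1_3,\psf^3_1)$ with sides along $\lsf^{1,3}$, $\lsf_1$, $\lsf_{13}$, and $(\psf^1_2,\psf^1_{23},\psf^3_{12})$ with sides along $\lsf^{1,3}_2$, $\lsf_{12}$ and the line joining $\psf^1_{23}$ to $\psf^3_{12}$. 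Their perspectivity from a point is the concurrency of the diagonals $\lsf^{1,2},\lsf^{1,2}_3,\lsf^{3,2}_1$, which must itself be derived from the forced coplanarities (pairwise intersections inside planes spanned by intersecting pairs among the seven lines, plus non-coplanarity). Moreover, Desargues' conclusion is not literally ``the three forced points are collinear'': two of them are vertices of the second triangle and only $\psf^2_{13}$ arises on the axis, so one must still identify the axis as the diagonal $\lsf^{2,3}_1$ through $\psf^2_1$ and $\psf^2_{13}$ by invoking the second concurrency $\lsf^{1,3},\lsf^{1,3}_2,\lsf^{2,3}_1$. Without these identifications your plan amounts to the assertion that Desargues' theorem should apply, which is the theorem's headline rather than its proof.
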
 
 
\begin{proof}
Without loss of generality, we consider the seven (short black) lines $\lsf,\lsf_1,\lsf_2,\lsf_3$ and $\lsf_{12},\lsf_{23},\lsf_{13}$ depicted in figure \ref{baker}.
\begin{figure}
\centerline{\includegraphics[scale=0.5]{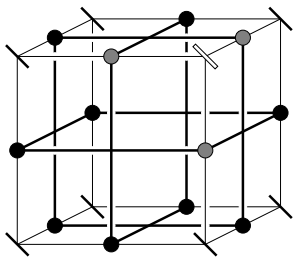}}
\caption{Seven (black) lines in $\C\P^3$ together with the associated 9 (black) points of intersection uniquely determine the remaining three (grey) points by virtue of the planarity property. Desargues' theorem then guarantees that these are collinear so that the remaining eighth (boxed) line completes the elementary cube of a fundamental line complex.}
\label{baker}
\end{figure}
The associated nine (black) points of intersection are given by $\psf^1,\psf^1_2,\psf^1_3$, $\psf^2,\psf^2_1,\psf^2_3$ and $\psf^3,\psf^3_1,\psf^3_2$. The coplanarity property is implemented by defining $\psf^3_{12}$ as the (grey) point of intersection of the plane passing through $\psf^3,\psf^3_1,\psf^3_2$ and the line $\lsf_{12}$. Similarly, the (grey) points $\psf^1_{23}$ and $\psf^2_{13}$ are constructed. In order to demonstrate that the three points $\psf^1_{23}, \psf^2_{13}$ and $\psf^3_{12}$ are indeed collinear and therefore define the (boxed) line $\lsf_{123}$, we focus on a subset of nine lines, namely, for instance, the lines $\lsf_1,\lsf_{12},\lsf_{13}$ and the lines passing through the pairs of points of intersection $(\psf^1,\psf^1_2),(\psf^1_3,\psf^1_{23}),(\psf^3_1,\psf^3_{12})$ and $(\psf^1,\psf^1_3),(\psf^1_2,\psf^1_{23}),(\psf^2_1,\psf^2_{13})$ (cf.\ figure~\ref{desarguesbaker}). Since the coplanarity property is equivalent to the concurrency property, the three lines of each of the second and third triplet of lines are concurrent. 
\begin{figure}
\centerline{\includegraphics[scale=0.5]{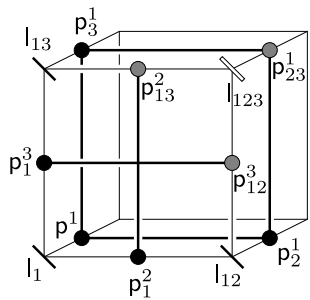}}
\caption{The nine black lines represent a subset of the lines in figure \ref{baker}. The tenth (boxed) line exists due to Desargues' theorem as illustrated in figure \ref{desargues}.}
\label{desarguesbaker}
\end{figure}
Hence, as illustrated in figure \ref{desargues}, the nine lines are part of a spatial $(10_3)$ configuration of 10 points and 10 lines since the tenth line passing through the points $\psf^1_{23}, \psf^2_{13},\psf^3_{12}$ exists by virtue of Desargues' classical theorem.
\end{proof}
\begin{figure}
\centerline{\includegraphics[scale=0.5]{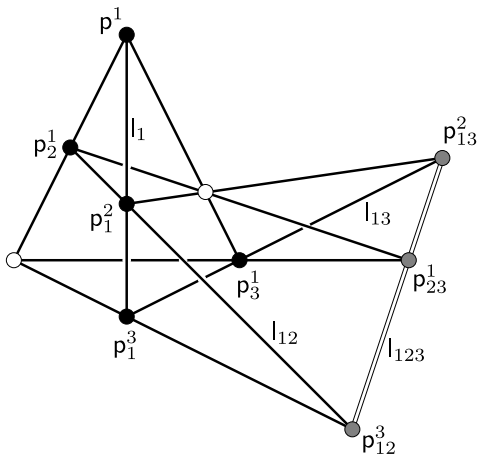}}
\caption{The Desargues configuration associated with the ten lines of the fundamental line complex displayed in figure~\ref{desarguesbaker}. The two white points exist due to the concurrency property.}
\label{desargues}
\end{figure}

\begin{remark}
The above theorem implies that, for any given ``hexagon'' of six lines $\lsf_1,\lsf_2,\lsf_3$ and $\lsf_{23},\lsf_{13},\lsf_{12}$ of an elementary cube of a fundamental line complex, the planarity property gives rise to a unique map between the lines $\lsf$ and $\lsf_{123}$ contained in the hyperboloids defined by $\lsf_1,\lsf_2,\lsf_3$ and $\lsf_{23},\lsf_{13},\lsf_{12}$ respectively.
\end{remark}

\begin{remark}\label{frontispiece}
The complete set of 8 lines of an elementary cube of a fundamental line complex and the 12 associated diagonals together with the 12 points of intersection of the lines and the three points of concurrency of the diagonals give rise to a spatial point-line configuration $(15_4\, 20_3)$ of 15 points and 20 lines with four lines through each point and three points on each line (cf.\ figure~\ref{configuration}). 
\begin{figure}
\centerline{\includegraphics[scale=0.2]{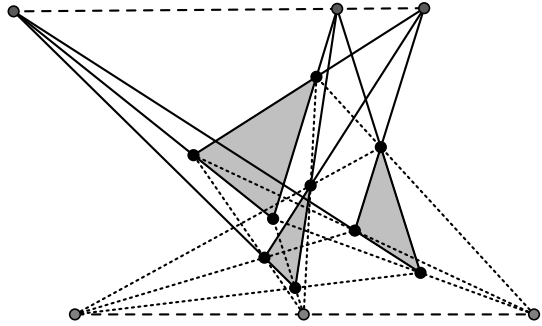}}
\caption{A classical $(15_4\,20_3)$ point-line configuration which is characterised by the property that three triangles are perspective from three collinear points or, equivalently, from a line.}
\label{configuration}
\end{figure} 
This configuration constitutes the frontispiece to Baker's first volume of {\it Principles of Geometry} \cite{Baker22} and was used (but not displayed) in Coxeter's monograph {\it Projective Geometry} \cite{Coxeter87} in connection with the proof of the converse of Desargues' theorem.
\end{remark}

As in the case of fundamental line complexes in $\C\P^4$, iterative application of the construction of elementary cubes leads to a unique fundamental line complex in $\C\P^3$. However, according to the above theorem, the Cauchy data (\ref{G57}) must be extended to the set of lines
\bela{G58}
   \{\lsf(n_1,n_2,n_3) : n_1 + n_2 + n_3 \in\{0,1,2\}\}
\ela
as indicated in figure \ref{elementary}.
Once again, it is understood that the Cauchy data respect the requirement that neighbouring lines intersect.

\section{Fundamental line complexes and correlations}

It turns out that fundamental line complexes in $\C\P^3$ may be characterised in terms of {\em correlations}. A correlation of a $d$-dimensional projective space is an incidence-preserving transformation which maps $k$-dimensional projective subspaces to $d-k-1$-dimensional projective subspaces \cite{OnishchikSulanke06,SempleKneebone52}. In particular, in three dimensions, the points of a line are mapped to planes which meet in a line. Here, we represent a correlation by a map
\bela{G59}
  \kappa :  \C\P^3 \rightarrow \{\mbox{planes in $\C\P^3$}\}.
\ela
For brevity, we use the same symbol $\kappa$ for the representation of this map in terms of homogeneous coordinates. Any correlation is then encoded in a complex 4$\times$4 matrix $\Bsf$ such that
\bela{G60}
  \kappa(\xsf) = \{\ysf\in\C^4 : \ysf^T\Bsf\xsf = 0\}.
\ela

In the previous section, it has been demonstrated that, for any given line of an elementary cube of a fundamental line complex in $\C\P^3$, the ``opposite'' line is determined by the hexagon formed by the remaining six lines and the planarity property. It is therefore natural to inquire as to whether opposite lines of an elementary cube of a fundamental line complex are linked by a common correlation. To this end, we recall the self-polarity of a hexagon in $\C\P^3$  (cf.\ \cite{Carver05}). Thus, we consider a hexagon in general position with vertices $\xsf^1,\ldots,\xsf^6$ as displayed in figure~\ref{hexagon}.
\begin{figure}
\centerline{\includegraphics[scale=0.5]{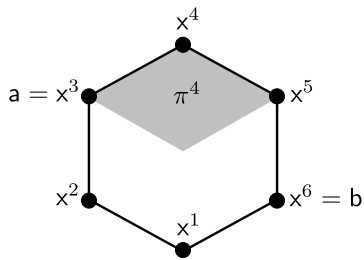}}
\caption{A hexagon in $\C\P^3$ with vertices $\xsf^i$ and associated planes $\pi^i$.}
\label{hexagon}
\end{figure}
The planes spanned by any three successive vertices $\xsf^{i-1},\xsf^i,\xsf^{i+1}$, where indices are taken modulo~6, are denoted by $\pi^i$. The condition for a correlation $\kappa$ to map any line (extended edge) of the hexagon to its opposite line may therefore be expressed as
\bela{G61}
  \kappa(\xsf^i) = \pi^{i+3},\quad i=1,\ldots, 6.
\ela

As in the case of projective transformations of $\C\P^3$ (collineations), a correlation is uniquely determined by the images of five points in general position. Accordingly, a correlation $\kappa$ of the above type is unique if it exists and is necessarily involutive since it acts as an involution on the hexagon. The corresponding matrix $\Bsf$ must then be either skew-symmetric or symmetric. In the former case, any point lies in its image, which contradicts the assumption of the hexagon being in general position. Hence, the correlation constitutes a polarity \cite{OnishchikSulanke06,SempleKneebone52} with respect to the quadric $\xsf^T\Bsf\xsf = 0$ defined by the symmetric matrix $\Bsf$ and it is convenient to define the inner product
\bela{G62}
  \langle\ysf,\xsf\rangle = \ysf^T\Bsf\xsf.
\ela
Thus, the conditions (\ref{G61}) may be formulated as
\bela{G63}
  \langle \xsf^{i+2},\xsf^i\rangle = \langle\xsf^{i+3},\xsf^i\rangle = \langle\xsf^{i+4},\xsf^i\rangle = 0,\quad i=1,\ldots,6.
\ela
The latter constitute 9 linear equations for the 10 coefficients of the symmetric matrix $\Bsf$. 

In order to demonstrate that the above system of linear equations admits a solution (which is unique up to scaling), we assume without loss of generality that
\bela{G64}
  \xsf^1 = \left(\bear{c}0\\0\\1\\0\ear\right),\quad \xsf^2 = \left(\bear{c}1\\0\\0\\0\ear\right),\quad \xsf^4 = \left(\bear{c}0\\0\\0\\1\ear\right),\quad \xsf^5 = \left(\bear{c}0\\1\\0\\0\ear\right).
\ela
The remaining two vertices are in general position and hence admit the parametrisation
\bela{G65}
  \xsf^3 = \asf = \left(\bear{c}\alpha^0\\ \alpha^1\\ \alpha^2\\ \alpha^3\ear\right),\quad   \xsf^6 = \bsf = \left(\bear{c}\beta^0\\ \beta^1\\ \beta^2\\ \beta^3\ear\right)
\ela
with non-vanishing components. The four equations of the linear system (\ref{G63}) which do not involve the vertices $\asf$ and $\bsf$ are readily seen to imply that the matrix $\Bsf$ is of the form
\bela{G66}
  \Bsf = \left(\bear{cccc} B^{00}&0&B^{02}&0\\
                                    0&B^{11}&0&B^{13}\\
                                    B^{02}&0&B^{22}&0\\
                                    0&B^{13}&0&B^{33}
           \ear\right).
\ela
The remaining five equations 
\bela{G67}
  \langle\xsf^1,\asf\rangle=\langle\xsf^5,\asf\rangle=\langle\bsf,\asf\rangle=\langle\xsf^2,\bsf\rangle=\langle\xsf^4,\bsf\rangle = 0
\ela
then simplify considerably and lead to the parametrisation
\bela{G68}
  B^{00} = -\frac{\beta^2}{\beta^0}B^{02},\quad B^{11} = -\frac{\alpha^3}{\alpha^1}B^{13},\quad
 B^{22} = -\frac{\alpha^0}{\alpha^2}B^{02},\quad B^{33} = -\frac{\beta^1}{\beta^3}B^{13},
\ela
%
%
%
%
where the entries $B^{02}$ and $B^{13}$ turn out to be two Pl\"ucker coordinates of the line passing through $\asf$ and $\bsf$, namely
\bela{G69}
  B^{02} = \gamma^{13} ,\quad B^{13} = \gamma^{02} 
\ela
as defined by (\ref{G35}). Accordingly, the following theorem has been retrieved.

\begin{theorem}
For any hexagon in $\C\P^3$ in general position, there exists a unique correlation which maps any line (extended edge) of the hexagon to its opposite line. The correlation is involutive and constitutes a polarity.
\end{theorem}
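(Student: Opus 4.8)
The plan is to convert the geometric requirement (\ref{G61}) into a homogeneous linear problem for the matrix $\Bsf$ and to extract existence, uniqueness, symmetry and involutivity from its structure. Writing $\kappa(\xsf)=\{\ysf:\langle\ysf,\xsf\rangle=0\}$ with the pairing (\ref{G62}), the demand $\kappa(\xsf^{i})=\pi^{i+3}$ is equivalent to asking that the three vertices spanning $\pi^{i+3}$ lie on the image plane of $\xsf^{i}$, which is precisely the system (\ref{G63}). Uniqueness I would settle first and cheaply: a correlation of $\C\P^{3}$, exactly like a collineation, is determined by the images of five points in general position, so prescribing all six image planes in (\ref{G61}) overdetermines $\kappa$ and forces it to be unique whenever it exists.

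Next I would pass from ``correlation'' to ``polarity''. The rule carrying each extended edge to the opposite edge is the antipodal involution on the hexagon (the index shift $i\mapsto i+3$ squaring to the identity on $\Z/6\Z$), so by uniqueness the correlation realising it must itself be involutive. An involutive correlation satisfies $\Bsf^{T}=\lambda\Bsf$ with $\lambda^{2}=1$, leaving the two possibilities that $\Bsf$ is skew-symmetric or symmetric. I would discard the skew case at once: a skew $\Bsf$ gives $\xsf^{T}\Bsf\xsf=0$ for every point, a null system in which each point lies on its own image plane, which is incompatible with the six vertices being in general position. Hence $\Bsf$ is symmetric, $\kappa$ is the polarity in the quadric $\xsf^{T}\Bsf\xsf=0$, and involutivity is then automatic since $\langle\ysf,\xsf\rangle=\langle\xsf,\ysf\rangle$ yields $\ysf\in\kappa(\xsf)\Leftrightarrow\xsf\in\kappa(\ysf)$.

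It remains to produce a solution, which I would do by solving (\ref{G63}) for a symmetric $\Bsf$ in adapted coordinates. Normalising four vertices to the standard basis as in (\ref{G64}) and parametrising the other two by (\ref{G65}), the four equations of (\ref{G63}) not involving $\asf$ or $\bsf$ immediately annihilate four off-diagonal entries and reduce $\Bsf$ to the sparse block form (\ref{G66}) with six free coefficients. Imposing the remaining five conditions (\ref{G67}), four of them couple a single basis vector to $\asf$ or to $\bsf$ and solve linearly for the diagonal entries $B^{00},B^{11},B^{22},B^{33}$ in terms of the two off-diagonal coefficients $B^{02},B^{13}$, which is the content of (\ref{G68}).

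The delicate point, and the step I expect to be the main obstacle, is the consistency of this overdetermined system together with the non-degeneracy of the resulting matrix, both of which hinge on general position. After (\ref{G68}) only $B^{02}$ and $B^{13}$ survive, yet the fifth condition $\langle\bsf,\asf\rangle=0$ must still hold; substituting (\ref{G68}) causes most terms to cancel and leaves the single homogeneous relation $B^{02}\gamma^{02}=B^{13}\gamma^{13}$ in the minors (\ref{G35}). Since the hexagon is generic, the Pl\"ucker coordinates $\gamma^{02},\gamma^{13}$ of the diagonal line $\asf\wedge\bsf$ are both nonzero, so this relation is a genuine single constraint solved by (\ref{G69}), $B^{02}=\gamma^{13}$ and $B^{13}=\gamma^{02}$, and the same non-vanishing makes $\det\Bsf\neq0$, so the solution is an honest non-degenerate polarity, unique up to scale. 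Combined with the uniqueness and symmetry already established, this proves that the required correlation exists, is involutive, and is a polarity.
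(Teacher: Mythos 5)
Your proposal is correct and follows essentially the same route as the paper: uniqueness via the five-point determination of correlations, involutivity forced by the antipodal symmetry of the hexagon, exclusion of the skew-symmetric (null-system) case by general position, and existence by solving the nine linear conditions (\ref{G63}) for a symmetric $\Bsf$ in the normalised coordinates (\ref{G64})--(\ref{G65}). The only difference is one of emphasis: you spell out the final consistency check $\langle\bsf,\asf\rangle=0$, which reduces to $B^{02}\gamma^{02}=B^{13}\gamma^{13}$, together with the non-degeneracy of $\Bsf$, whereas the paper compresses this into the statement that the remaining equations ``simplify considerably'' and lead to (\ref{G68}), (\ref{G69}).
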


We now combinatorially attach the lines of the hexagon to six vertices of an elementary cube and the points of intersection $\xsf^i$ to the corresponding edges as depicted in figure \ref{correlation}.
\begin{figure}
\centerline{\includegraphics[scale=0.5]{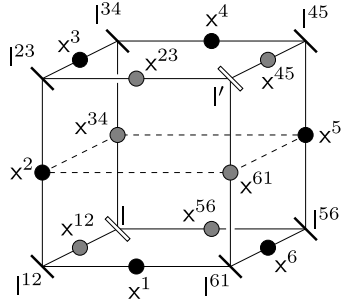}}
\caption{The hexagon with vertices $\xsf^i$ uniquely determines a correlation $\kappa$ which interchanges opposite lines $\lsf^{ii+1}$ and $\lsf^{i+3i+4}$. The correlation $\kappa$ then maps the given line $\lsf$ to a line $\lsf'$. The vertices $\xsf^i$ and the points of intersection $\xsf^{ii+1}$ enjoy the planarity property, that is, for instance, the points $\xsf^2,\xsf^{34},\xsf^5$ and $\xsf^{61}$ are coplanar.}
\label{correlation}
\end{figure}
The line passing through the vertices $\xsf^i$ and $\xsf^{i+1}$ is denoted by $\lsf^{i i+1}$. There exists a one-parameter family of lines $\lsf$ which intersect the lines $\lsf^{12},\lsf^{34}$ and $\lsf^{56}$. Any fixed line $\lsf$ is mapped by the correlation $\kappa$ to a line $\lsf'$ which intersects the lines $\lsf^{23},\lsf^{45}$ and $\lsf^{61}$. Accordingly, the 8 lines $\lsf,\lsf^{12},\lsf^{23},\lsf^{34},\lsf^{45},\lsf^{56},\lsf^{61}$ and $\lsf'$ form an elementary cube of a line complex with the usual property of lines intersecting along edges. On the other hand, as pointed out in the preceding, the hexagon and the line $\lsf$ uniquely determine via the planarity property an eighth line $\hat{\lsf}$ such that the eight lines form an elementary cube of a fundamental line complex. Hence, as indicated above, the connection between the lines $\lsf'$ and $\hat{\lsf}$ is now being examined.

We denote by $\xsf^{ii+1}$ the point of intersection of a line $\lsf$ or its counterpart $\lsf'$ and the line $\lsf^{ii+1}$ as indicated in figure \ref{correlation}. One may construct the one-parameter family of lines $\lsf$ by parametrising the points
\bela{G70}
 \xsf^{12} =  \mu^1\xsf^1 + \mu^2\xsf^2, \quad \xsf^{34} = \mu^3\xsf^3 + \mu^4\xsf^4,\quad\xsf^{56} = \mu^5\xsf^5 + \mu^6\xsf^6
\ela
and solving the collinearity condition
\bela{G71}
  \xsf^{12} + \xsf^{34} + \xsf^{56} = 0
\ela
for $\mu^3,\mu^4,\mu^5,\mu^6$ in terms of $\mu^1$ and $\mu^2$. A brief calculation reveals that
\bela{G72}
 \bear{rlrl}
  \mu^3 = &\dis\frac{\beta^0\mu^1 - \beta^2\mu^2}{\gamma^{02}},&\quad
  \mu^4 = &\dis\frac{\gamma^{03}\mu^1 - \gamma^{23}\mu^2}{\gamma^{02}}\AS
  \mu^6 = &\dis-\frac{\alpha^0\mu^1 - \alpha^2\mu^2}{\gamma^{02}},&\quad
  \mu^5 = &\dis\frac{\gamma^{01}\mu^1 + \gamma^{12}\mu^2}{\gamma^{02}}.
 \ear
\ela
Similarly, the one-parameter family of lines intersecting the lines $\lsf^{23},\lsf^{45}$ and $\lsf^{61}$ may be obtained on use of the parametrisation 
\bela{G73}
 \xsf^{23} =  \nu^2\xsf^2 + \nu^3\xsf^3, \quad \xsf^{45} = \nu^4\xsf^4 + \nu^5\xsf^5,\quad\xsf^{61} = \nu^6\xsf^6 + \nu^1\xsf^1
\ela
of the points of intersection. The collinearity condition
\bela{G74}
  \xsf^{23} + \xsf^{45} + \xsf^{61} = 0
\ela
then determines the coefficients $\nu^6,\nu^1,\nu^2,\nu^3$ in terms of $\nu^4$ and $\nu^5$. The latter two parameters are fixed (up to scaling) by the condition that $\lsf'$ be the image of $\lsf$ under the correlation $\kappa$. This is equivalent to demanding that, for instance,
\bela{G75}
  \langle\xsf^{12},\xsf^{23}\rangle = 0
\ela
and it turns out that $\nu^i = \mu^i$. Finally, one may directly verify that, for instance,
\bela{G76}
  |\xsf^2,\xsf^{34},\xsf^5,\xsf^{61}| = 0
\ela
so that the points $\xsf^2,\xsf^{34},\xsf^5,\xsf^{61}$ are seen to be coplanar as indicated in figure~\ref{correlation}. This implies that, remarkably, the lines $\lsf^{ii+1}$ and $\lsf,\lsf'$ form an elementary cube of a fundamental line complex in $\C\P^3$ and therefore $\lsf'=\hat{\lsf}$. Hence, a characterisation of fundamental line complexes in $\C\P^3$ in terms of correlations has been uncovered.

\begin{theorem}
Fundamental line complexes in $\C\P^3$ are line complexes with the property that neighbouring lines intersect and opposite lines of any elementary cube are interchanged by a correlation which, necessarily, constitutes a polarity. Furthermore, the planarity and concurrency properties associated with any elementary cube of a fundamental line complex are interchanged by the corresponding correlation in the sense that any four coplanar diagonals are mapped to four concurrent diagonals and vice versa. 
\end{theorem}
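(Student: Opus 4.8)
The plan is to establish the two assertions separately. For the first assertion, I would argue that the two properties---neighbouring lines intersecting, and opposite lines of every elementary cube being interchanged by a correlation---are together equivalent to the definition of a fundamental line complex. One direction is already in hand: by the preceding Theorem and the accompanying construction, the hexagon of six lines $\lsf^{12},\ldots,\lsf^{61}$ of an elementary cube together with the line $\lsf$ uniquely determines, via the planarity (equivalently concurrency) property, the eighth line $\hat{\lsf}$, while the same hexagon determines a unique involutive correlation $\kappa$ (a polarity) sending each edge-line to its opposite. The crux is to identify $\lsf'=\kappa(\lsf)$ with $\hat{\lsf}$. I would do this by the explicit coordinate computation already set up in the excerpt: parametrise the one-parameter family of transversals of $\lsf^{12},\lsf^{34},\lsf^{56}$ by (\ref{G70})--(\ref{G72}), parametrise the transversals of $\lsf^{23},\lsf^{45},\lsf^{61}$ by (\ref{G73})--(\ref{G74}), and impose that $\lsf'$ be the $\kappa$-image of $\lsf$ through one orthogonality condition such as (\ref{G75}). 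The computation yields $\nu^i=\mu^i$, and then the single determinant identity (\ref{G76}) forces the coplanarity $\xsf^2,\xsf^{34},\xsf^5,\xsf^{61}$, which is exactly one of the six coplanarity conditions of the Definition; by Remark \ref{oneimpliesfive} the remaining five follow. Hence the eight lines $\lsf,\lsf^{12},\ldots,\lsf^{61},\lsf'$ form an elementary cube of a fundamental line complex, so $\lsf'=\hat{\lsf}$ by the uniqueness in Theorem \ref{desarguestheorem}.

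Conversely, if a line complex is fundamental then neighbouring lines intersect by definition, and for any elementary cube the hexagon of its six ``side'' lines determines both $\kappa$ and, via planarity, the opposite line; the computation just described shows these opposite lines are precisely the $\kappa$-images, so opposite lines are indeed interchanged by the polarity. This closes the equivalence establishing the first sentence of the theorem. The fact that $\kappa$ must be a polarity rather than a general correlation is inherited directly from the preceding Theorem, whose skew-symmetric alternative was excluded by the general-position hypothesis.

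For the second assertion---that $\kappa$ interchanges the coplanarity and concurrency properties---I would use the defining property (\ref{G60}) of a correlation: it maps points to planes and, dually, carries a pencil of concurrent lines (lines through a common point) to a pencil of coplanar lines (lines in a common plane), since concurrency of lines is incidence at a point and coplanarity is incidence with a plane, and these are exchanged by any correlation. Concretely, four ``diagonals'' of the same type that are concurrent meet in one of the three points of concurrency of the cube; applying $\kappa$ sends that point to a plane containing the four image diagonals, so the images are coplanar, and vice versa. I would note that the diagonals of the cube are themselves interchanged appropriately because each diagonal connects two points of intersection and $\kappa$ acts compatibly on the whole $(15_4\,20_3)$ configuration of Remark \ref{frontispiece}; this is what guarantees that ``diagonals map to diagonals of the same type.''

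The main obstacle I expect is the bookkeeping in the first assertion: verifying $\nu^i=\mu^i$ and the determinant identity (\ref{G76}) requires carefully tracking the Plücker coordinates $\gamma^{\mu\nu}$ through the system (\ref{G70})--(\ref{G74}) and using the Plücker relation (\ref{G36}) to collapse the resulting expressions. Once this single coplanarity is confirmed, the passage to a full fundamental cube is immediate via Remark \ref{oneimpliesfive} and the uniqueness of the eighth line, and the second assertion is essentially a formal consequence of the incidence-reversing nature of correlations, requiring no further heavy computation.
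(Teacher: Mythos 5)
Your proposal is correct and follows essentially the same route as the paper: the identification $\lsf'=\hat{\lsf}$ via the parametrisation (\ref{G70})--(\ref{G74}), the condition (\ref{G75}) yielding $\nu^i=\mu^i$, and the coplanarity check (\ref{G76}), combined with the uniqueness of the polarity from the preceding theorem and of the eighth line from Theorem \ref{desarguestheorem}. Your handling of the second assertion via the incidence-reversing nature of correlations (points $\leftrightarrow$ planes, hence concurrency $\leftrightarrow$ coplanarity) is also exactly the argument implicit in the paper.
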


\section{Conclusions}

The superposition principle (\ref{G10}) for the squared eigenfunctions $M^{ik}$ associated with binary Darboux transformations is standard in the algebraic and geometric theory of continuous and discrete integrable systems. In this paper, we interpret this superposition principle as a stand-alone discrete integrable system, namely the $M$-system (\ref{G3}), and discuss its algebraic and geometric properties. In order to highlight its universality, we have briefly indicated its direct connection with conjugate lattices and the hexahedron recurrence. The main aim of this paper is to introduce a novel correspondence between the $M$-system and fundamental line complexes represented as lattices in the Pl\"ucker quadric. In fact, the Pl\"ucker coordinates of the lines turn out to be the entries $M^{ik}$ and, more generally, the minors of the matrix $\mathcal{M}$. This point of view is custom-made for a detailed analysis of special fundamental line complexes associated with sub-geometries. This is the subject of a forthcoming publication. For instance, it is well known that the binary Darboux transformation associated with the CKP hierarchy of integrable equations requires the squared eigenfunctions to be "symmetric". Indeed, it is easy to see that the constraint $M^{ik}=M^{ki}$ is compatible with the $M$-system. In geometric terms, this means that one considers the intersection of the Pl\"ucker quadric with a hyperplane, resulting in a three-dimensional quadric which one may identify with the Lie quadric of Lie circle geometry. In this manner, the intersecting lines of the fundamental line complexes are contained in a linear complex and may be reinterpreted as touching oriented circles. As a consequence, these circle complexes are governed by the dCKP equation alluded to in Section 3. An analogous approach is also available in the context of Lie sphere geometry. A key ingredient in the geometric treatment of these special fundamental line complexes is the characterisation of fundamental line complexes in terms of the correlations discussed in Section 5.


\begin{thebibliography}{99}

\bibitem{Finikov59} Finikov, SP. 1959 
{Theorie der Kongruenzen}.
Akademie-Verlag: Berlin.

\bibitem{Eisenhart60} Eisenhart, LP. 1960
{A treatise on the differential geometry of curves and surfaces}. 
New York: Dover.

\bibitem{Doliwa01} Doliwa, A. 2001
Discrete asymptotic nets and W-congruences in Pl\"ucker line geometry.
{\sl J.\ Geom.\ Phys.\ }{\bf 39}, 9-29.

\bibitem{BobenkoSuris09} Bobenko, AI, Suris, YB. 2008 
{\sl Discrete differential geometry. Integrable structure}.
Graduate Studies in Mathematics {\bf 98}. Providence: AMS.

\bibitem{DoliwaSantiniManas00} Doliwa, A, Santini, PM, Manas, M. 2000
Transformations of quadrilateral lattices.
{\sl J.\ Math.\ Phys.\ }{\bf 41}, 944-990.

\bibitem{Sauer50} Sauer, R. 1950
Parallelogrammgitter als Modelle f\"ur pseudosph\"arische Fl\"achen.
{\sl Math.\ Z.\ }{\bf 52}, 611-622.

\bibitem{Wunderlich51} Wunderlich, W. 1951
Zur Differenzengeometrie der Fl\"achen konstanter negativer Kr\"ummung.
{\sl \"Osterreich.\ Akad.\ Wiss.\ math.-nat.\ Kl.\ S.-B.\ II} {\bf 160}, 39-77.

\bibitem{BobenkoPinkall96} Bobenko, A, Pinkall, U. 1996
Discrete surfaces with constant negative Gaussian curvature and the Hirota equation.
{\sl J.\ Diff.\ Geom.\ } {\bf 43}, 527-611.

\bibitem{Schief03} Schief, WK. 2003 
On the unification of classical and novel integrable surfaces. II.\ Difference geometry. 
{\sl Proc.\ R.\ Soc.\ London A} {\bf 459}, 373-391.

\bibitem{Schief06} Schief, WK. 2006
On a maximum principle for minimal surfaces and their integrable discrete counterparts.
{\sl J.\ Geom.\ Phys.\ }{\bf 56}, 1484-1495.

\bibitem{BobenkoPottmannWallner2010} Bobenko, AI, Pottmann, H, Wallner, J. 2010
A curvature theory for discrete surfaces based on mesh parallelity.
{\sl Math.\ Annalen} {\bf 348}, 1-24.

\bibitem{WangJiangBompasPottmann13} Wang, J, Jiang, C, Bompas, P, Wallner, J, Pottmann, H. 2013
Discrete line congruences for shading and lighting.
{\sl SGP '13 Proceedings of the Eleventh Eurographics/ACMSIGGRAPH Symposium on Geometry Processing}, pp. 53-62.
The Eurographics Association and Blackwell Publishing Ltd.

\bibitem{Eisenhart62} Eisenhart, LP. 1962 
{\sl Transformations of surfaces}.
New York: Chelsea.

\bibitem{RogersSchief02} Rogers, C, Schief, WK. 2002 
{\sl Darboux and B\"acklund transformations. Geometry and modern applications in soliton theory}. 
Cambridge Texts in Applied Mathematics. Cambridge, UK: Cambridge University Press (2002).

\bibitem{KenyonPemantle13} Kenyon, R, Pemantle, R. 2013
Double-dimers, the Ising model and the hexahedron recurrence.
arXiv:1308.2998 [math-ph]. 

\bibitem{HoltzSturmfels07} Holtz, O, Sturmfels, B. 2007
Hyperdeterminantal relations among symmetric principal minors.
{\sl J.\ Algebra} {\bf 316}, 634-648.

\bibitem{Plucker65} Pl\"ucker, J. 1865
On a new geometry of space.
{\sl Phil.\ Trans.\ R.\ Soc.\ London} {\bf 155}, 725-791.

\bibitem{OnishchikSulanke06} Onishchik, AL,  Sulanke, R. 2006
{\sl Projective and Cayley-Klein geometries}.
Springer Monographs in Mathematics. Berlin: Springer-Verlag.

\bibitem{Coxeter87} Coxeter, HSM. 1987
{\sl Projective geometry}, 2nd ed.
New York: Springer-Verlag. 

\bibitem{Baker22} Baker, HF. 1922
{\sl Principles of geometry}.
Cambridge: Cambridge University Press.

\bibitem{BobenkoHuhnenVenedeyRoerig14} Bobenko, A, Huhnen-Venedey, E,  R\"orig, T. 2014
Supercyclidic nets. 
{\sl Submitted}. 

\bibitem{SempleKneebone52} Semple, JG, Kneebone, GT. 1952
{\sl Algebraic projective geometry}.
Oxford: Oxford University Press.

\bibitem{Carver05} Carver, WB. 1905
On the Cayley-Veronese class of configurations.
{\sl Trans.\ Amer.\ Math.\ Soc.\ }{\bf  6} 534-545.

\bibitem{Darboux82} Darboux, G. 1882
Sur une proposition relative aux equations lin\'eaires.
{\sl C.\ R.\ Acad.\ Sci.\ Paris} {\bf 94}, 1456-1459.

\bibitem{MatveevSalle91} Matveev, VB, Salle, MA. 1991
{\sl Darboux transformations and solitons}.
Berlin: Springer-Verlag.

\bibitem{OevelSchief93} Oevel, W, Schief, WK. 1993
Darboux theorems and the KP hierarchy.
In {\sl Applications of analytic and
geometric methods to nonlinear differential equations} (ed.\ Clarkson, PA), pp.\ 192-206.
Dordrecht: Kluwer Academic Publishers.

\bibitem{ManasDoliwaSantini97} Manas, M, Doliwa, A, Santini, PM. 1997
Darboux transformations for multidimensional quadrilateral lattices.\ I.
{\sl Phys.\ Lett.\ A} {\bf 232}, 99-105.

\bibitem{Bianchi2327} Bianchi, L. 1923-1927
{\sl Lezioni di geometria differenziale}, vols.\ 1-4.
Bologna: Zanichelli.

\bibitem{Hirota03} Hirota, R. 2003 
Determinants and Pfaffians: How to obtain $N$-soliton solutions from $2$-soliton solutions.
{\sl RIMS  K\^{o}ky\^{u}roku} {\bf 1302}, 220-242.

\bibitem{PHD} Schief, WK, 1992 
{\it Generalized Darboux and Darboux-Levi transformations in 2+1 dimensions}.
PhD thesis. Loughborough University of Technology, UK.

\bibitem{AratynNissimovPacheva98} Aratyn, H, Nissimov, E, Pacheva, S. 1998
Method of squared eigenfunction potentials in integrable hierarchies of KP type.
{\sl Commun.\ Math.\ Phys.\ }{\bf 193}, 493-525.

\bibitem{Manas01} Manas, M. 2001
Fundamental transformations for quadrilateral lattices: first potentials and $\tau$-functions, symmetric and pseudo-Egorov reductions.
{\sl J.\ Phys.\ A: Math.\ Gen.\ }{\bf 34}, 10413-10421.

\bibitem{GoncharovKenyon13} Goncharov, AB, Kenyon, R. 2013 
Dimers and integrability.
{\sl Ann.\ ENS} {\bf 46}, 747-813. 

\bibitem{AdlerBobenkoSuris03} Adler, VE, Bobenko, AI, Suris, YuB. 2003
Classification of integrable equations on quad-graphs. The consistency approach.
{\sl Comm.\ Math.\ Phys.\ }{\bf 233}, 513-543.

\bibitem{Kashaev96} Kashaev, RM. 1996
On discrete three-dimensional equations associated with the local Yang-Baxter relation.
{\sl Lett.\ Math.\ Phys.\ }{\bf 33}, 389-397.

\bibitem{Schief03b} Schief, WK. 2003
Lattice geometry of the discrete Darboux, KP, BKP and CKP equations. Menelaus' and
Carnot's theorems.
{\sl J.\ Nonlinear Math.\ Phys.\ }{\bf 10}, Supplement 2, 194-208.

\bibitem{GelfandKapranovZelevinsky94} Gelfand, IM, Kapranov, MM, Zelevinsky, AV. 1994
{\sl Discriminants, resultants, and multidimensional determinants}.
Boston: Birkh\"auser.

\bibitem{TsarevWolf09} Tsarev, SP,  Wolf, T. 2009
Hyperdeterminants as integrable discrete systems. 
{\sl J.\ Phys.\ A: Math.\ Theor.\ }{\bf 42}, 454023 (9pp).

\bibitem{KonopelchenkoSchief98} Konopelchenko, BG, Schief, WK. 1998
Three-dimensional integrable lattices in Euclidean spaces: conjugacy and orthogonality.
{\sl Proc.\ R.\ Soc.\ London A} {\bf 454}, 3075-3104.

\bibitem{KonopelchenkoBogdanov95}  Bogdanov, LV, Konopelchenko, BG. 1995
Lattice and $q$-difference Darboux-Zakharov-Manakov systems via $\partial$-bar-dressing method.
{\sl J.\ Phys.\ A: Math.\ Gen.\ } {\bf 28}, L173-L178.

\bibitem{DoliwaSantini97} Doliwa, A,  Santini, P. 1997
Multidimensional quadrilateral lattices are integrable.
{\sl Phys.\ Lett.\ }{\bf 233}, 265-272.

\end{thebibliography}
\end{document}